\documentclass[11pt]{article}
\usepackage{bm}
\usepackage{float} 
\usepackage[utf8]{inputenc}
\usepackage[dvips]{graphicx}
\usepackage{latexsym}
\usepackage{amsmath,amssymb,amsfonts,amsthm,bbm}
\usepackage[dvipsnames]{xcolor}
\usepackage{enumerate} 
\usepackage[all,poly]{xy}
\usepackage{stmaryrd}
\usepackage{textcomp}
\usepackage{caption,tikz}
\usepackage[colorlinks=true, linkcolor=NavyBlue, urlcolor=black, citecolor=NavyBlue]{hyperref}

\newcommand \NN {\mathbb{N}}
\newcommand \RR {\mathbb{R}}
\newcommand \ZZ {\mathbb{Z}}

\newcommand \EE {\mathbb{E}}
\newcommand \PP {\mathbb{P}}

\newcommand \DP {\overset{\mathrm{def}}{=}}

\newcommand \X   {{\cal X}}
\newcommand{\calP}{\mathcal P}
\newcommand{\calF}{\mathcal F}

\newcommand{\q}{\quad}

\renewcommand{\d}{\delta}

\renewcommand{\O}{\Omega}

\renewcommand{\epsilon}{\varepsilon}

\newcommand{\bpn}{{\pmb p}}
\newcommand{\bPn}{{\pmb P}}
\newcommand{\bp}{{\hskip .1em (\pmb p,\pmb a) }}
\newcommand{\bP}{{\hskip .1em (\pmb P, \pmb A)}}

\theoremstyle{plain}
\newtheorem{theorem}{Theorem}[section]
\newtheorem{proposition}[theorem]{Proposition}
\newtheorem{lemma}[theorem]{Lemma}
\newtheorem{corollary}[theorem]{Corollary}

\theoremstyle{definition}
\newtheorem{definition}[theorem]{Definition}
\theoremstyle{remark}
\newtheorem{remark}[theorem]{Remark}

\let\originalleft\left
\let\originalright\right
\renewcommand{\left}{\mathopen{}\mathclose\bgroup\originalleft}
\renewcommand{\right}{\aftergroup\egroup\originalright}

\author{ Serge Cohen\footnote{ Institut de Math\'ematiques de Toulouse; UMR 5219, Université de Toulouse; CNRS, UT3 F-31062 Toulouse Cedex 9, France. Serge.Cohen@math.univ-toulouse.fr}
\and Shambo Saha \footnote{ Indian Statistical Institute, 203, B T Road, Baranagar, West Bengal, India, shambosaha140704@gmail.com}
}

\title{Split merge dynamics for expanding intervals and point processes on the real line.}

\begin{document}

\maketitle
\begin{abstract}
 We study sequences of partitions of a non decreasing sequence $I_n$ of intervals into subintervals, starting from the trivial partition,
in which each partition is obtained from the one before by splitting its subintervals in two, according to a given rule, 
and then merging pairs of subintervals at the break points of the old partition.
The $n$th partition then comprises $n+1$ subintervals with $n$ break points.  
When $ I_n = [0,1] $ is constant,  the empirical distribution of these points was shown to converge weakly to a singular probability supported in $\{0,1\} $ in a  previous article. When the length of the intervals is regularly varying with a positive index, we show in this article that the limit can be absolutely continuous. In the last part we extend the split merge dynamics to partitions of $ \mathbb R. $ In this case we characterize invariant distributions and show that special instances of split merge dynamics for expanding intervals converge to these invariant measures vaguely in distribution. 
\end{abstract}

\noindent \textit{Keywords}: fragmentation,  Markov chain, limit theorems, point processes

\noindent \textit{AMS classification (2020)}: 60J05, 60F05, 60G55.

\section{Introduction}
In this  article we study sequences of partitions $(\calP_n)_{n\ge0}$ of intervals $(a_{n,0},a_{n,n+1}]$, of the form
$$
\calP_n=\{(a_{n,0},a_{n,1}],\dots,(a_{n,n},a_{n,n+1}]\}
$$
where  $(a_{n,0})_{n\geq 0} $ is  a  fixed non increasing sequence  and $ (a_{n,n+1})_{n\geq 0}
$ a fixed non decreasing sequence and the $n$ break points $a_{n,1},\dots,a_{n,n}$ satisfy
$$
a_{n,0} \le a_{n,1}\le\dots\le a_{n,n}\le a_{n,n+1} .
$$
Thus $\calP_0=(a_{n,0},a_{n,n+1}]$ and $\calP_n$ are partitions of $(a_{n,0},a_{n,n+1}]$ into $n+1$ subintervals
$$
(a_{n,0},a_{n,n+1}]=(a_{n,0},a_{n,1}]\cup\dots\cup(a_{n,n},a_{n,n+1}].
$$
Suppose we are given a family of splitting proportions and boundary conditions 
$$
\bp=(p_{n,k}:n\ge1,\,1\le k\le n, \; (a_{n,0})_{n\geq 0}, \; (a_{n,n+1})_{n\geq 0} )
$$ 
with $p_{n,k}\in[0,1]$ for all $k$.
For $n\ge1$, we define the break points of the partition $\calP_n$ recursively by 
\begin{equation}
\label{eqind}
a_{n,k}=p_{n,k}a_{n-1,k-1}+(1-p_{n,k})a_{n-1,k},\q k=1,\ldots,n.
\end{equation}
Thus we { \textit split} each interval $(a_{n-1,k-1},a_{n-1,k}]$ of the partition $\calP_{n-1}$ into two subintervals, 
with proportions $1-p_{n,k}$ on the left and $p_{n,k}$ on the right,
and then we { \textit merge }the resulting subintervals at the break points of $\calP_{n-1}$.
We call this split merge dynamics  \textit{fragmentations with erasure} because each interval of the partition $\calP_{n-1}$ is 
fragmented by adding a new point inside it, but then the break points of $\calP_{n-1}$ are erased.
The cases where $p_{n,k}=0$ or $p_{n,k}=1$ for some $n$ and $k$ are allowed: 
in such cases some intervals of the partition will be empty, but we continue to list them `with multiplicity'.
We use the same formula~\eqref{eqind} for the dynamics of the break points in all cases.

See Figure~\ref{fig:def} for an illustration. 

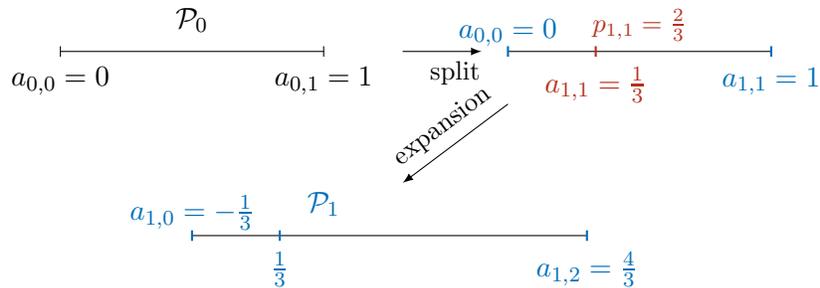
\begin{figure}[H]
	\centering
	\begin{tikzpicture}[scale=3.5]
	\draw (0,0) -- (1,0);
	\draw (0,.02) -- (0,-.02) node[below]{$a_{0,0}=0$};
	\draw (1,.02) -- (1,-.02) node[below]{$a_{0,1}=1$};
	\draw (.5,.12) node{$\calP_0$};
	\draw[->,>=latex] (1.3,0) -- (1.6,0);
	\draw (1.5,0) node[below]{\small split};
	\draw[->,>=latex] (1.7,-.2) -- (1.3,-.5);
	\draw (1.5,-.35) node[above,rotate=37]{\small expansion};
	\begin{scope}[shift={(1.7,0)}]
	\draw (0,0) -- (1,0);
	\draw[thick,NavyBlue] (0,.02) -- (0,-.02) node[above]{$a_{0,0}=0$};
	\draw[thick,BrickRed] (1/3,.02) -- (1/3,-.02) node[below]{$a_{1,1}=\frac{1}{3}$};
	\draw[BrickRed] (.5,.1) node{\small $p_{1,1} = \frac{2}{3}$};
	\draw[thick,NavyBlue] (1,.02) -- (1,-.02) node[below]{$a_{1,1}=1$};
	\end{scope}
	\begin{scope}[shift={(0.5,-.7)}]
	\draw (0,0) -- (1.5,0);
	\draw[thick,NavyBlue] (0,.02) -- (0,-.02) node[above]{$a_{1,0}=-\frac{1}{3}$};
	\draw[thick,NavyBlue] (1/3,.02) -- (1/3,-.02) node[below]{$\frac{1}{3}$};
	\draw[thick,NavyBlue] (1.5,.02) -- (1.5,-.02) node[below]{$a_{1,2}=\frac{4}{3}$};
	\draw[NavyBlue] (.5,.12) node{$\calP_1$};
	\end{scope}
	\end{tikzpicture}
	\caption{Illustration of the first iteration of $(\calP_n)_{n\ge0}$.}
	\label{fig:def}
\end{figure}

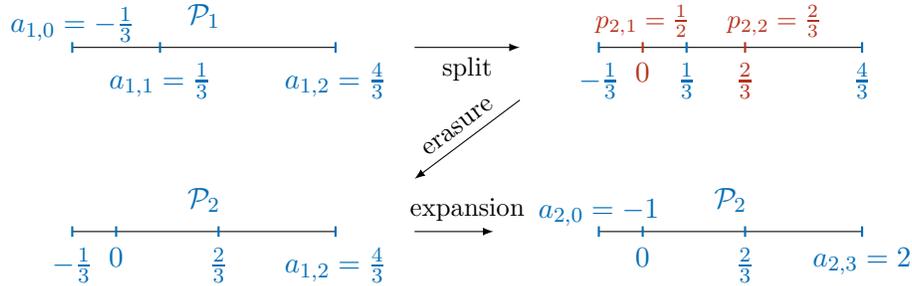
\begin{figure}[H]
	\centering
	\begin{tikzpicture}[scale=3.5]
	
	\begin{scope}
	\draw (0,0) -- (1.0,0);
	\draw[thick,NavyBlue] (0,.02) -- (0,-.02) node[above]{$a_{1,0}=-\frac{1}{3}$};
	\draw[thick,NavyBlue] (1/3,.02) -- (1/3,-.02) node[below]{$a_{1,1}=\frac{1}{3}$};
	\draw[thick,NavyBlue] (1.0,.02) -- (1.0,-.02) node[below]{$a_{1,2}=\frac{4}{3}$};
	\draw[NavyBlue] (.5,.12) node{$\calP_1$};
	\end{scope}
	\begin{scope}
	\draw[->,>=latex] (1.3,0) -- (1.7,0);
	\draw (1.5,0) node[below]{\small split};
	\draw[->,>=latex] (1.7,-.2) -- (1.3,-.5);
	\draw (1.5,-.35) node[above,rotate=37]{\small erasure};
	\end{scope}
	\begin{scope}[shift={(2,0)}]
	\draw (0,0) -- (1,0);
	\draw[thick,NavyBlue] (0,.02) -- (0,-.02) node[below]{$-\frac{1}{3}$};
	\draw[thick,BrickRed] (1/6,.02) -- (1/6,-.02) node[below]{0};
	\draw[BrickRed] (1/6,.1) node{\small $p_{2,1} = \frac{1}{2}$};
	\draw[thick,NavyBlue] (1/3,.02) -- (1/3,-.02) node[below]{$\frac{1}{3}$};
	\draw[thick,BrickRed] (5/9,.02) -- (5/9,-.02) node[below]{$\frac{2}{3}$};
	\draw[BrickRed] (2/3,.1) node{\small $p_{2,2} = \frac{2}{3}$};
	\draw[thick,NavyBlue] (1,.02) -- (1,-.02) node[below]{$\frac{4}{3}$};
	\end{scope}
	\begin{scope}[shift={(0,-0.7)}]
	\draw (0,0) -- (1,0);
	\draw[thick,NavyBlue] (0,.02) -- (0,-.02) node[below]{$-\frac{1}{3}$};
	\draw[thick,NavyBlue] (1/6,.02) -- (1/6,-.02) node[below]{0};
	\draw[thick,NavyBlue] (5/9,.02) -- (5/9,-.02) node[below]{$\frac{2}{3}$};
	\draw[thick,NavyBlue] (1,.02) -- (1,-.02) node[below]{$a_{1,2}=\frac{4}{3}$};
	\draw[NavyBlue] (.5,.12) node{$\calP_2$};
	\end{scope}
	\draw[->,>=latex] (1.3,-0.7) -- (1.6,-0.7);
	\draw (1.5,-.7) node[above]{\small expansion};
	\begin{scope}[shift={(2,-0.7)}]
	\draw (0,0) -- (1,0);
	\draw[thick,NavyBlue] (0,.02) -- (0,-.02) node[above]{$a_{2,0}=-1$};
	\draw[thick,NavyBlue] (1/6,.02) -- (1/6,-.02) node[below]{0};
	\draw[thick,NavyBlue] (5/9,.02) -- (5/9,-.02) node[below]{$\frac{2}{3}$};
	\draw[thick,NavyBlue] (1,.02) -- (1,-.02) node[below]{$a_{2,3}=2$};
	\draw[NavyBlue] (.5,.12) node{$\calP_2$};
	\end{scope}
	
	\end{tikzpicture}
	\caption{Illustration of the second iteration of $(\calP_n)_{n\ge0}$.}
	\label{fig:def2}
\end{figure}

We sometimes choose the family of splitting proportions and of the sequences for the boundary conditions  randomly.
In this case, we will use upper-case letters for the  splitting proportions, the boundary conditions  $\bP=(P_{n,k}:n\ge1,\,1\le k\le n, \;  (A_{n,0})_{n\geq 0}, \; (A_{n,n+1})_{n\geq 0})$
and the interior break points $(A_{n,k} : 1 \le k \le n)$. But their relation will be the same
\begin{equation}
\label{eq:induc-rand2*}
A_{n,k}=P_{n,k}A_{n-1,k-1}+(1-P_{n,k})A_{n-1,k},\q k=1,\ldots,n.
\end{equation}

In this paper, we mainly consider the following three choices for the splitting proportions and the boundary conditions.
\begin{itemize}
\item \textbf{Deterministic stratified fragmentation:} we choose a
deterministic sequence $(p_n)_{n\geq 1}$ in $[0,1]$ and set $p_{n,k}=p_n$ for all $k$. 
An interesting special case is the choice $p_n\equiv p\in(0,1)$. 
\item \textbf{Random stratified fragmentation:} we set $P_{n,k}=P_n$ for all $k$, 
where $(P_n)_{n\ge1}$ are i.i.d.\@ random variables in $[0,1]$. 
\item \textbf{Fully random fragmentation:} we take $(P_{n,k}:n\ge1,\,1\le k\le n)$ to be an 
array of i.i.d.\@ random variables in $[0,1]$, $ (A_{n,0} :n\ge0) $ a non increasing sequence of random variables and $ (A_{n,n+1} :n\ge0) $ a non decreasing sequence of random variables. The three sequences are independent.
\end{itemize}

In each of these three cases, 
the sequence $(\calP_n)_{n\ge0}$ forms a Markov chain whose state-space is the set of finite partitions 
of the unit interval into subintervals.

In a previous article~\cite{Cohen25} we were interested in the behavior 
of the empirical distribution of the break points of the partition $\calP_n$ as $n\to\infty,$   when $ \forall n \ge 0, \; a_{n,0}=0, a_{n,n+1}=1.$  
We set
\begin{equation} 
\label{e:emp.dist}
\tilde{g}_n=\frac1n\sum_{k=1}^n\delta_{a_{n,k}},\q
\tilde{G}_n=\frac1n\sum_{k=1}^n\delta_{A_{n,k}}
\end{equation} 
and we observed the weak convergence of these probabilities to sum of Dirac masses at $0$ or $ 1.$

 In this article  we show that the limit of the empirical probabilities depends on the regular variation index $ c $ of the length $ l_n=a_{n,n+1}-a_{n,0} $  of $(a_{n,0},a_{n,n+1}]$ , when it exists. 
A  new feature appears when $  0 < c < \infty $~:  the limit is absolutely continuous with respect to 
the Lebesgue measure. This fact is coherent with the rule of the thumb that claims that the support 
of the limit in the previous case is $\{0,1\} $ because these point are the only points that are not forgotten in the erasure process. 
In section~\ref{sec:expand} we transport  the empirical distribution $ \tilde{g}_n$ by the affine map $\varphi_n $ from $[a_{n,0}, a_{n,n+1}] $ onto $[0,1].$ When $ 0 < c < \infty,$  we show that  $ (\varphi_n)_* (\tilde{g}_n) $ converges weakly to distributions related to  beta distributions on $ [0,1] $  in sections~\ref{sec:char} and~\ref{sec:weak}.  In section~\ref{sec:gen} we show that, in special instances,   the break points  are asymptotically regularly spaced on the left part and on the right part of $[a_{n,0}, a_{n,n+1}],$ with different spacing  on the left and on the right. An interesting feature described in this article is when  the sequences $(A_{n,0})_{n\geq 0} $   and $ (A_{n,n+1})_{n\geq 0} $  are arrival time of independent Poisson point processes  and the fragmentation with  uniform  splitting proportions. It is studied in section~\ref{sec:rand-expan}. In this case we  prove that at each step the length of the subintervals in the partition are $\Gamma_2$ distributed. It implies that $ G_n = (\varphi_n)_* (\tilde{G}_n)$ converges almost surely to the Lebesgue measure on $[0,1]$ and we have a fluctuation result in this particular case. In Section~\ref{sec:pp} we extend the split merge dynamics to partitions $\calP_n = \cup_{ k \in {\mathbb Z}} (a_{n,k}, a_{n,k+1} ] $ of the full real line $ \mathbb R.$ Such partitions are in one to one correspondence  with integer valued measures $\mu_n = \sum_{ k \in {\mathbb Z}} \delta_{a_{n,k}}. $ We prove for full random fragmentation with uniform splitting that the invariant distributions of the split merge dynamics is a stationary point process $ \mu_{\Gamma_2} = \sum_{ k \in {\mathbb Z}}  \delta_{X_{k}}.$ Moreover we show that the increments $ (X_{k+1} - X_k)_{ k \in {\mathbb Z}} $ are i.i.d. $ \Gamma_2 $ random variables. At last we show that random measures associated to the special instances of section~\ref{sec:rand-expan} converge vaguely in distribution to  $ \mu_{\Gamma_2}.$

\section{Assumptions}
\label{sec:expand}
Let us first precise the assumptions for the sequences
$
(a_{n,0})_{n\geq 0},\ (a_{n,n+1})_{n\geq 0}.
$ 
 In all cases we assume $
(a_{n,0})_{n\geq 0} $ is a non increasing sequence and $  (a_{n,n+1})_{n\geq 0} $ 
a non decreasing function, and let 
\begin{equation}
    \label{eq1}
    l_n=a_{n,n+1}-a_{n,0} \quad \text{and}\quad q_n=\frac{a_{n,n+1}-a_{n-1,n}}{l_n-l_{n-1}} \xrightarrow[n\to\infty]{} q
\end{equation}
where $q \in [0,1].$  We distinguish three cases 
\begin{itemize} 
\item \textbf{Slow 
variations, $ c=0 $~:} $l_n $ is slowly varying and we choose  a non decreasing function  $L(\cdot)$ from $[0,+\infty[ $  to $[0,+\infty[ $  such that  $ L(n)=l_n $  and 
\begin{equation}
    \label{eq2}
    \frac{L(tx)}{L(t)}\xrightarrow[t\to\infty]{}1,
\end{equation}

\item \textbf {Regular 
variations, $ 0 < c < \infty $~:} $l_n $ is regularly  varying sequence with index $ c $ and we choose an increasing  function  $L(\cdot)$ from $[0,+\infty[ $  to $[0,+\infty[ $  such that  $ L(n)=l_n $  and  
\begin{equation}
    \label{eq3}
    \frac{L(tx)}{L(t)}\xrightarrow[t\to\infty]{} x^c. 
\end{equation}
\item \textbf{Fast increments 
, $c = \infty $~: } We assume that there exists a non decreasing function
 $L(\cdot)$  from $[0,+\infty[ $  to $[0,+\infty[ $  such that $ L(n)=l_n $  and
\begin{align}
  \label{eq4}
    \frac{L(tx)}{L(t)}\xrightarrow[t\to\infty]{} 0 &,\; \text{if} \; x < 1,\\
    \frac{L(tx)}{L(t)}\xrightarrow[t\to\infty]{} \infty  &,\; \text{if} \; x >  1.
\end{align}  
\end{itemize}
Properties of regularly varying 
sequences can be found for instance in~\cite{Bingham89}, the fast increasing case 
was also considered in~\cite{Feller50}. 

Define the affine transformation
\begin{equation}
    \label{transform}
    \varphi_n:[0,1]\longrightarrow[a_{n,0},a_{n,n+1}],\quad\varphi_n(x)=a_{n,0}+l_nx
\end{equation}
We are interested in the behavior
of the empirical distribution $ \tilde{g}_n, \; \tilde{G}_n $ of the break points of the partition $\calP_n$ as $n\to\infty$. It turns out that it is easier to study  $ (\varphi_n)_* (\tilde{g}_n) $ and 
we denote by  
\begin{equation}  
 \label{eq:gn}
  g_n= (\varphi_n)_* (\tilde{g}_n)= \frac1n\sum_{k=1}^n\delta_{\varphi_n^{-1} (a_{n,k})}
\end{equation} 
 and 
\begin{equation}
    \label{eq:Gn}
G_n=\frac1n\sum_{k=1}^n\delta_{\varphi_n^{-1}(A_{n,k})}.
\end{equation}
 Actually the study of the empirical distributions on $ [0,1] $ is 
more tractable since we have a probabilistic representation of the break points in this case that we introduce in the next section. 

\section{Characterization of the break points}
\label{sec:char}


Define $I=\{(n,k) : n \geq 1,\ 1 \leq k \leq n\}$. Fix
\[
\bp = \left(p_{n,k} : (n,k)\in I,\;  a_{n,0} : n\ge 0,  a_{n,n+1} : n\ge 0 \right).
\]
with $p_{n,k}\in[0,1]$ for all $n$ and $k$ and $ (a_{n,0})_{n \ge 0}$ non increasing, $ (a_{n,n+1})_{n \ge 0} $ non decreasing. Let $(U_{n,k}:(n,k)\in I), \; (U_n^\nu: n\ge 1),   \; (U_n^\epsilon: n\ge 1) $ be independent  families of independent random variables, all uniformly distributed on $[0,1]$  and set
\begin{equation}
\label{y}
Y_{n,k}=\mathbf{1}_{[U_{n,k}\leq p_{n,k}]}, \; \nu_n = \mathbf{1}_{[U_n^\nu \le q_n]}, \; \epsilon_n = \mathbf{1}_{[U_n^\epsilon \le 1-\frac{l_{n-1}}{l_n}]}
\end{equation}
so that $Y_{n,k}$ is a Bernoulli random variable with success probability $p_{n,k},$ $ \nu_n  $ a Bernoulli random variable with success probability $q_n,$ $ \epsilon_n  $ a Bernoulli random variable with success probability $ 1-\frac{l_{n-1}}{l_n},$ (if $ l_{n-1}= l_n, $ $ \epsilon_n=0$ a.s.). Hence  
\begin{equation}
\label{vardef}
Y_{n,k},\quad \nu_n,\quad  \epsilon_n \quad
n\geq 1,\ 1\leq k \leq n
\end{equation}
 are independent.  Write 
\begin{align*}
    &\Omega_0=\{(y_{n,k})_{n\geq1, \ 1\leq k \leq n}:y_{n,k}\in \{0,1\} \ \forall n\geq1, \ 1\leq k \leq n\}
    \\&\Omega_1=\{(u_{n})_{n\geq1}:u_{n}\in \{0,1\} \ \forall n\geq1\}.
\end{align*}

Denote by $\PP^\bp$ the joint law of $(Y_{n,k})_{n\geq1, \ 1\leq k},  (\nu_{n})_{n\geq1},  (\epsilon_{n})_{n\geq1}$ on $\Omega_0 \times \Omega_1 \times \Omega_1=\Omega $. Define a Markov process $(x_n)_{n\geq0}$ on $\Omega$ with $x_0=0$ and
\begin{equation}
    \label{xn}
    x_n=n\epsilon_n\nu_n+(1-\epsilon_n)(x_{n-1}+y_{n,x_{n-1}+1}).
\end{equation}
The following proposition provides a useful representation of the break points. 
\begin{proposition}
    $\forall n\geq0, \ 1\leq k\leq n$, we have
    \[
    a_{n,k}=\varphi_n\left(\PP^\bp(x_n\leq k-1)\right)
    \]
\end{proposition}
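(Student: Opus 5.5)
Set $F_n(k)=\PP^\bp(x_n\le k-1)$ for $0\le k\le n+1$. I will prove by induction on $n$ the stronger claim $a_{n,k}=\varphi_n(F_n(k))$ for all $0\le k\le n+1$. The boundary cases are automatic because $x_n\in\{0,\dots,n\}$ almost surely (induction on \eqref{xn}: either $x_n=n\epsilon_n\nu_n\in\{0,n\}$, or $x_n=x_{n-1}+y\le n$). Hence $F_n(0)=0$ and $F_n(n+1)=1$, which give $\varphi_n(0)=a_{n,0}$ and $\varphi_n(1)=a_{n,n+1}$. For $n=0$ there is nothing else to check.

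For the inductive step, fix $1\le k\le n$ and condition on $\epsilon_n$ and $\nu_n$ in \eqref{xn}; these are independent of $x_{n-1}$ and of the $y_{n,\cdot}$. If $\epsilon_n=0$, then $x_n\le k-1$ holds iff either $x_{n-1}\le k-2$, or $x_{n-1}=k-1$ and $y_{n,k}=0$. Since $y_{n,k}$ is independent of $x_{n-1}$, the conditional probability is
\[
F_{n-1}(k-1)+(1-p_{n,k})\bigl(F_{n-1}(k)-F_{n-1}(k-1)\bigr)=p_{n,k}F_{n-1}(k-1)+(1-p_{n,k})F_{n-1}(k).
\]
If $\epsilon_n=1$, then $x_n=n\nu_n$, so $x_n\le k-1$ (recall $k\le n$) iff $\nu_n=0$, which has probability $1-q_n$. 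Therefore
\[
F_n(k)=\frac{l_{n-1}}{l_n}\bigl(p_{n,k}F_{n-1}(k-1)+(1-p_{n,k})F_{n-1}(k)\bigr)+\Bigl(1-\frac{l_{n-1}}{l_n}\Bigr)(1-q_n).
\]

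Now apply $\varphi_n$. By the induction hypothesis, $F_{n-1}(j)=(a_{n-1,j}-a_{n-1,0})/l_{n-1}$, and so by \eqref{eqind}
\[
l_{n-1}\bigl(p_{n,k}F_{n-1}(k-1)+(1-p_{n,k})F_{n-1}(k)\bigr)=a_{n,k}-a_{n-1,0}.
\]
This gives $\varphi_n(F_n(k))=a_{n,0}+a_{n,k}-a_{n-1,0}+(l_n-l_{n-1})(1-q_n)$. By \eqref{eq1}, $(l_n-l_{n-1})q_n=a_{n,n+1}-a_{n-1,n}$, so
\[
(l_n-l_{n-1})(1-q_n)=(l_n-l_{n-1})-(a_{n,n+1}-a_{n-1,n})=a_{n-1,0}-a_{n,0}.
\]
Substituting, the extra terms cancel and $\varphi_n(F_n(k))=a_{n,k}$, which completes the induction.

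The main obstacle is making the dynamics \eqref{xn} and the definition of $q_n$ match exactly. There are two points to handle. First, when $l_n=l_{n-1}$, the ratio $q_n$ is undefined; in that case $\epsilon_n=0$ almost surely, the term involving $q_n$ disappears, and both endpoints are unchanged, so the identity still holds. Second, the step $x_n=x_{n-1}+y_{n,x_{n-1}+1}$ needs the index $x_{n-1}+1\le n$, which follows from the a.s. range $x_{n-1}\in\{0,\dots,n-1\}$ established above.
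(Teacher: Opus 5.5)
Your proof is correct and follows essentially the same route as the paper. Both condition on $\epsilon_n$ to obtain the recursion for $\PP^\bp(x_n\le k-1)$, use the definition of $q_n$ to identify the constant term $(1-q_n)(l_n-l_{n-1})$ with $a_{n-1,0}-a_{n,0}$, and match the result with \eqref{eqind}. Your version is more careful than the paper's in three places: you include the boundary indices $k=0,n+1$ in the induction hypothesis, you split on the value of $x_{n-1}$ rather than conditioning on $y_{n,x_{n-1}+1}$ as if it were Bernoulli$(p_{n,k})$, and you treat the degenerate case $l_n=l_{n-1}$; the paper leaves all of these implicit.
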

\label{ehh}
\begin{proof}
    $\forall n\geq0, \ 1\leq k\leq n$, define, $\alpha_{n,k}=\PP^\bp(x_n\leq k-1)$. So, for $1\leq k\leq n$,
\begin{align*}
    \alpha_{n,k}&=\PP^\bp(x_n\leq k-1)
    \\&=\PP^\bp(x_n\leq k-1|\epsilon_n=1)\left(1-\frac{l_{n-1}}{l_n}\right)+\PP^\bp(x_n\leq k-1|\epsilon_n=0)\frac{l_{n-1}}{l_n}
    \\&=\PP^\bp(n\nu_n\leq k-1)\left(1-\frac{l_{n-1}}{l_n}\right)+\PP^\bp(x_{n-1}+y_{n,x_{n-1}+1}\leq k-1)\frac{l_{n-1}}{l_n}
    \\&=\PP^\bp(\nu_n=0)\left(1-\frac{l_{n-1}}{l_n}\right)+[\PP^\bp(x_{n-1}+y_{n,x_{n-1}+1}\leq k-1|y_{n,x_{n-1}+1}=1)p_{n,k}
    \\&+\PP^\bp(x_{n-1}+y_{n,x_{n-1}+1}\leq k-1|y_{n,x_{n-1}+1}=0)(1-p_{n,k})]\frac{l_{n-1}}{l_n}
    \\&=(1-q_n)\left(1-\frac{l_{n-1}}{l_n}\right)+[\PP^\bp(x_{n-1}\leq k-2)p_{n,k}+\PP^\bp(x_{n-1}\leq k-1)(1-p_{n,k})]\frac{l_{n-1}}{l_n}
    \\&=(1-q_n)\left(1-\frac{l_{n-1}}{l_n}\right)+\frac{l_{n-1}}{l_n}[p_{n,k}\alpha_{n-1,k-1}+(1-p_{n,k})\alpha_{n-1,k}]
    \\&=\frac{a_{n-1,0}-a_{n,0}}{l_n}+\frac{l_{n-1}}{l_n}[p_{n,k}\alpha_{n-1,k-1}+(1-p_{n,k})\alpha_{n-1,k}]
\end{align*}
Using the definition~\eqref{transform},
\begin{align*}
    \varphi_n(\alpha_{n,k})&=a_{n,0}+l_n\left(\frac{a_{n-1,0}-a_{n,0}}{l_n}+\frac{l_{n-1}}{l_n}[p_{n,k}\alpha_{n-1,k-1}+(1-p_{n,k})\alpha_{n-1,k}]\right)
    \\&=a_{n-1,0}+l_{n-1}[p_{n,k}\alpha_{n-1,k-1}+(1-p_{n,k})\alpha_{n-1,k}]
    \\&=p_{n,k}\varphi_{n-1}(\alpha_{n-1,k-1})+(1-p_{n,k})\varphi_{n-1}(\alpha_{n-1,k})
\end{align*}
Since $a_{n,k}$ satisfies the same recursion~\eqref{eqind}, we have $\varphi_n(\alpha_{n,k})=a_{n,k}$.
\end{proof}
\noindent Consider the more general case where the splitting proportions  and the boundary conditions  $\bP $  are random. We assume moreover that the boundary conditions sequences are slowly varying, regularly varying, or fast increasing almost surely and $Q_n \to Q $ a.s. 
By augmenting our probability space $(\O^*,\calF,\PP^*)$ if necessary, 
we can assume that $\O^*$ supports independent families of independent random variables $(U_{n,k}:(n,k)\in I) , \; (U_n^\nu: n\ge 1),  \; (U_n^\epsilon n\ge 1) $ as above,
which are  moreover independent of $\bP$.
Set 
$$
Y_{n,k}=\mathbf{1}_{[U_{n,k}\le P_{n,k}]}, \;  \; \nu_n = \mathbf{1}_{[U_n^\nu \le Q_n]}, \; \epsilon_n = \mathbf{1}_{[U_n^\epsilon \le 1-\frac{L_{n-1}}{L_n}]}
$$
and define a random process $X=(X_n)_{n\ge0}$ on $\O^*$ by setting $X_0=0$ and then, recursively for $n\ge1$,
$$
X_n=n\epsilon_n\nu_n+(1-\epsilon_n)(X_{n-1}+Y_{n,X_{n-1}+1}).
$$
At this point we need another representation for $ x_n $ in~\eqref{xn}.
\begin{definition}
    Define a random process $(\tau_n)_{n\geq1}$ as
    \begin{equation}
        \tau_n=\begin{cases}
        0&\text{if } \ \forall k\leq n, \ \epsilon_k=0,\\
        \max\{k\leq n : \epsilon_k=1\}&\text{o/w }
    \end{cases}
    \end{equation}
\end{definition}

    This can be used to rewrite the process $(x_n)_{n\geq0}$ as
    \begin{equation}    
        x_n=\tau_n\nu_{\tau_n}+\sum_{k=\tau_n+1}^ny_{k,x_{k-1}+1}
    \end{equation}
    where the sum is $0$ if $\tau_n=n$. In fact, it is useful to define an associated process $(s_n)_{n\geq0}$ with $s_0=0$, and for $n\geq1$,
    \begin{equation}
    \label{sn}
        s_n=\sum_{k=1}^ny_{k,x_{k-1}+1}
    \end{equation}
    so that
    \begin{equation}
    \label{rep}
         x_n=\tau_n\nu_{\tau_n}+s_n-s_{\tau_n}.
    \end{equation}

\noindent
We note the following straightforward fact.

Given a bounded measurable function $\Phi$ on $\{0,1\}^I\times\{0,1\}^\NN\times\{0,1\}^\NN$, define $\phi:[0,1]^I \times [0,1]^{\NN} \times [0,1]^{\NN} \to\RR$ by
$$
\phi \bp =\EE^\bp(\Phi(y,\nu,\epsilon))
$$
where $y=(y_{n,k})_{(n,k)\in I},\ \nu=(\nu_{n})_{n\geq1}, \ \epsilon=(\epsilon_{n})_{n\geq1}$ are the r.v.'s defined in \eqref{y} and \eqref{vardef}.
Then $\phi$ is measurable and we have, almost surely,
\begin{equation}
\label{phiPhi}
\EE^*(\Phi(Y,\nu,\epsilon)|\bP)=\phi \bP .
\end{equation}

\noindent
It will be convenient sometimes to regard \eqref{eqind} and \eqref{e:emp.dist}
as defining measurable functions $a_{n,k} \bp $ and $g_n \bp $ of $\bp$, so we can write
$$
A_{n,k}=a_{n,k} \bP,\q G_n=g_n \bP .
$$
Note that $x=(x_n)_{n\geq0}$ is a function of $y,\nu,\epsilon$. So, on taking $\Phi(y,\nu,\epsilon)=1_{\{x_n\le k-1\}}$, we have $\phi \bp =\varphi_n(a_{n,k} \bp )$ by~\eqref{phiPhi}.
Hence, the random break points $(A_{n,k}:(n,k)\in I)$ associated with the random splitting proportions $\bPn$ satisfy, 
almost surely,
$$
A_{n,k}=a_{n,k} \bP =\varphi_n(\PP^*(X_n\le k-1|\bP))
$$

Because of the representation~\eqref{rep} we need to understand the asymptotic distribution 
of $ (\frac{\tau_n}{n},\nu_n).$ Let us start with a few results concerning $ \tau_n.$

\begin{proposition}
\label{prp} For $ 0 < c < \infty,$ 
    \[\frac{L(\tau_n)}{L(n)} \xrightarrow[n\to\infty]{d}u(0,1)\]
    where $ u(0,1) $ is the uniform distribution on $[0,1].$
\end{proposition}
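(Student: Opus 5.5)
The distribution of $\tau_n$ can be computed exactly. The $\epsilon_k$ are independent Bernoulli variables with $\PP(\epsilon_k=1)=1-\frac{l_{k-1}}{l_k}$, so for $0\le m\le n$
\[
\PP(\tau_n\le m)=\PP(\epsilon_{m+1}=\dots=\epsilon_n=0)=\prod_{k=m+1}^{n}\frac{l_{k-1}}{l_k}=\frac{l_m}{l_n}=\frac{L(m)}{L(n)} .
\]
The product telescopes. Since we are in the case $0<c<\infty$, $L$ is increasing and $l_0>0$ is implicitly needed so that the ratios make sense; then $l_0/l_n>0$ is exactly the mass $\PP(\tau_n=0)$.

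Because $L$ is increasing and $\tau_n$ is integer valued, for $t\in(0,1)$ we have $\{L(\tau_n)/L(n)\le t\}=\{\tau_n\le m_n(t)\}$, where $m_n(t)=\max\{m\le n: L(m)\le tL(n)\}$. This gives
\[
\PP\Big(\frac{L(\tau_n)}{L(n)}\le t\Big)=\frac{L(m_n(t))}{L(n)}\le t .
\]
For the matching lower bound, note that $m_n(t)+1$ violates the defining inequality, so $L(m_n(t)+1)>tL(n)$. Hence
\[
\frac{L(m_n(t))}{L(n)}\ \ge\ t\,\frac{L(m_n(t))}{L(m_n(t)+1)} .
\]

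The remaining step, which I expect to be the only real obstacle, is to show $L(m)/L(m+1)\to1$ along $m=m_n(t)$. First, $m_n(t)\to\infty$: for fixed $m$, $L(m)/L(n)\to0$ because $l_n\to\infty$ (regular variation with $c>0$), so eventually $L(m)\le tL(n)$. Second, by \eqref{eq3} together with the uniform convergence theorem for regularly varying functions (Bingham et al.) applied on a compact set of $x$ near $1$, $L(m+1)/L(m)\to1$ as $m\to\infty$. One can also argue directly: for $m$ large, $L(m+1)/L(m)\le L(m(1+\delta))/L(m)\to(1+\delta)^c$ by monotonicity, and then let $\delta\downarrow0$.

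Therefore $\PP\big(L(\tau_n)/L(n)\le t\big)\to t$ for every $t\in(0,1)$. This is pointwise convergence of the distribution function on $[0,1]$ to that of $u(0,1)$, which gives the convergence in distribution.
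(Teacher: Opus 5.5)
Your proof is correct. Its first half matches the paper's: the telescoping identity $\PP(\tau_n\le m)=l_m/l_n$, and the reduction to $\PP(\tau_n\le m_n(t))$ with your $m_n(t)$ equal to the paper's $f_n(t)$.

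The two arguments differ in how they get the key limit $L(m_n(t))/L(n)\to t$.

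\begin{itemize}
\item The paper cites the theory of asymptotic inverses of regularly varying functions (\cite{Bingham89}, p.~28). This gives $f_n(t)/n\to t^{1/c}$, and from it $L(f_n(t))/L(n)\to t$.
\item You use the sandwich $t\,L(m_n)/L(m_n+1)\le L(m_n)/L(n)\le t$. This only needs $m_n(t)\to\infty$ and $L(m+1)/L(m)\to 1$, and you derive both directly from \eqref{eq3} and monotonicity. Your $\delta$-argument $L(m+1)/L(m)\le L(m(1+\delta))/L(m)\to(1+\delta)^c$ means the uniform convergence theorem is not even needed.
\end{itemize}

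Your route is more self-contained, and it shows what the result actually depends on. The only inputs are monotonicity of $L$, $l_n\to\infty$ and $l_{n+1}/l_n\to 1$. So the same uniform limit also holds, for example, for slowly varying $l_n\to\infty$ (the case $c=0$). It fails in general for $c=\infty$: with $l_n=e^n$, the variable $L(\tau_n)/L(n)=e^{\tau_n-n}$ has a discrete limit.

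The paper's route additionally yields $f_n(t)/n\to t^{1/c}$. That fact is not used afterwards, since Corollary~\ref{dinf} is proved via P\'olya's theorem and $L(nt)/L(n)\to t^c$.

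One minor correction: $l_0>0$ is not needed for the telescoping. If $l_0=0$, this just forces $\epsilon_1=1$, and $\PP(\tau_n\le m)=l_m/l_n$ still holds.
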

\begin{proof}

    Firstly, note that $\forall k\leq n$,
    \begin{align*}
        \PP(\tau_n\leq k)&=\PP(\epsilon_{k+1}=0,\dots,\epsilon_n=0)
        \\&=\frac{l_k}{l_{k+1}}\cdot\frac{l_{k+1}}{l_{k+2}}\cdots\frac{l_{n-1}}{l_{n}}\\&=\frac{l_k}{l_{n}}
    \end{align*}
    Now, 
    $\forall t \in [0,1]$, define
    \[
    f_n(t)=\max\{k\leq n:L(k)\leq tL(n)\},
    \]
    which is related to the inverse of $ L.$ We rely on section 7 of~\cite{Bingham89} p 28 to get regular variations of the inverse of $ L.$ Consequently we get 
    \begin{align}
            &\frac{f_n(t)}{n}\xrightarrow[n\to\infty]{}t^{1/c}
    \end{align}
  and 
  \begin{equation}
  \label{amz}
  \frac{L(f_n(t))}{L(n)}\xrightarrow[n\to\infty]{}t.
\end{equation}

 Then
    \begin{align*}
        \PP\left(\frac{L(\tau_n)}{L(n)}\leq t\right)&= \PP\left(\tau_n\leq f_n(t)\right)
        \\&=\frac{l_{f_n(t)}}{l_n}
        \\&=\frac{l_{f_n(t)}}{L(f_n(t))}\cdot\frac{L(n)}{l_n}\cdot\frac{L(f_n(t))}{L(n)}\xrightarrow[n\to\infty]{}t\quad\text{by  \eqref{amz}}.
    \end{align*}
\end{proof}
\begin{corollary}
For $ c >0,$ 
    \label{asinf}
    \[
    \tau_n\xrightarrow[n\to\infty]{a.s.}\infty
    \]
\end{corollary}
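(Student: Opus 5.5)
The approach is to use monotonicity of $\tau_n$ together with the explicit law $\PP(\tau_n\le k)=l_k/l_n$ computed in the proof of Proposition~\ref{prp}. By definition, $\tau_n=\max\{k\le n:\epsilon_k=1\}$ (or $0$), so $(\tau_n)_{n\ge1}$ is non decreasing in $n$. Hence $\tau_\infty=\lim_n\tau_n$ exists almost surely in $\NN\cup\{\infty\}$. It suffices to show that $\PP(\tau_\infty\le k)=0$ for every fixed $k\ge0$, and then take the countable union over $k$.

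For fixed $k$, the events $\{\tau_n\le k\}$ decrease in $n$, and their intersection is $\{\tau_\infty\le k\}$. By continuity from above,
\[
\PP(\tau_\infty\le k)=\lim_{n\to\infty}\PP(\tau_n\le k)=\lim_{n\to\infty}\frac{l_k}{l_n}.
\]
This computation uses the independence of the $\epsilon_j$ and is valid for $n\ge k$, as in the proof of Proposition~\ref{prp}. Equivalently, the event is $\{\epsilon_j=0 \ \forall j>k\}$, whose probability is the infinite product $\prod_{j>k} l_{j-1}/l_j$.

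It remains to check that $l_n\to\infty$ when $c>0$. This is the only point requiring care, and it is mild. In the regularly varying case $0<c<\infty$, we have $L(2t)/L(t)\to 2^c>1$. Iterating along $t=2^m$ gives $L(2^m)\to\infty$, and monotonicity of $L$ then gives $L(t)\to\infty$. Alternatively, one can cite the standard fact from~\cite{Bingham89} that a regularly varying function of positive index tends to infinity. In the case $c=\infty$, \eqref{eq4} with $x=2$ gives $L(2t)/L(t)\to\infty$, and the same argument applies. In both cases $l_n=L(n)\to\infty$, while $l_k$ is finite and fixed, so $l_k/l_n\to0$.

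One must also make sure $l_k>0$ is not needed. If $l_k=0$, the bound is trivially $0$. We also need the ratios $l_{j-1}/l_j$ to be well defined for large $j$, which holds once $l_j>0$, and this follows from $l_j\to\infty$. Thus $\PP(\tau_\infty\le k)=0$ for all $k$, and so $\tau_n\to\infty$ almost surely.
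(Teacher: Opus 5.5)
Your proof is correct and follows the same scheme as the paper: you show $\PP(\tau_n\le k)\to0$ for each fixed $k$ and conclude from the monotonicity of $(\tau_n)$, a step you justify more carefully than the paper by using continuity from above. The only difference is in the case $0<c<\infty$. There the paper goes through Proposition~\ref{prp} and Pólya's theorem, whereas you use the exact formula $\PP(\tau_n\le k)=l_k/l_n$ together with $l_n\to\infty$ in all cases, exactly as the paper itself does for $c=\infty$. This is simpler and also avoids needing $L$ to be strictly increasing.
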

\begin{proof}
When $ 0 < c <  \infty,$ 
    $\forall M>0$,
    \begin{align*}
        \mathbb{P}\left(\tau_n\leq M\right)&=\mathbb{P}\left(\frac{L(\tau_n)}{L(n)}\leq \frac{L(M)}{L(n)}\right)
        \\&\leq \left|\mathbb{P}\left(\frac{L(\tau_n)}{L(n)}\leq \frac{L(M)}{L(n)}\right)-\frac{L(M)}{L(n)}\right|+\frac{L(M)}{L(n)}
        \\&\leq \sup_{t\in\mathbb{R}}\left|\mathbb{P}\left(\frac{L(\tau_n)}{L(n)}\leq t\right)-t\right|+\frac{L(M)}{L(n)}\xrightarrow[n\to\infty]{} 0\quad \text{by Pólya's theorem.}
    \end{align*}
    Since $ (\tau_n)_{n\geq1}$ is a monotonic sequence,
    \begin{align*}
    &\tau_n\xrightarrow[n\to\infty]{a.s.}\infty
    \end{align*}
    If $ c = + \infty, \lim_{ t \to \infty} L(t) = \infty.$ and  $\forall M>0$
    $ \mathbb{P}\left(\tau_n\leq M\right )=   \frac{L([M])}{L(n)} \xrightarrow[n\to\infty]{} 0,  $ 
    where $ [M] $ is the notation for the integer  part of $M,$
    and the end of the proof is the same. 
\end{proof}
Let us introduce some notations for beta distributions. 
\begin{definition}
For $ r,s > 0, $ a random variable has a distribution $\beta_{r, s}(0,1)$ if its density 
with respect to the Lebesgue measure  $ dx $ on $ [0,1]$ is 
$$ \beta_{r, s}(0,1)(x)=  \frac{x^{r-1} (1-x)^{s-1}}{\int_0^1 x^{r-1} (1-x)^{s-1} dx}{\mathbf 1}_{x \in [0,1]}.$$ For $ a <  b $ we denote $ Y \overset{d}{=} \beta_{r, s}(a,b) $
if $ \frac{Y - a }{b- a } $ has $\beta_{p, q}(0,1)$ distribution. We often drop 
$ (0,1) $    in $\beta_{p, q}(0,1).$ The density of $ \beta_{r, s}(a,b) $ is $ (b-a) \beta_{r, s}(0,1)(\frac{x-a}{y-b}).$  By convention we set $ \beta_{r, s}(a,a)= \delta_a (dx). $

\end{definition}
 Please note the abuse of notation, where $\beta_{r, s}(0,1)$  stands for the distribution 
 and the density of this distribution with respect of $ dx. $ Later we abuse this notation
 for other density functions. 

\begin{corollary}
\label{dinf}
\begin{itemize}
\item If  $ 0 < c < \infty, $ $ \frac{\tau_n}{n} $ converges in distribution~: 
    \[
    \frac{\tau_n}{n} \xrightarrow[n\to\infty]{d} \beta_{(c,1)}.
        \]
\item  If $ c=0, $ $ \frac{\tau_n}{n} $ converges in probability to $0.$ 
\item  If $ c=\infty,  $ $ \frac{\tau_n}{n} $ converges in probability to $1.$  
\end{itemize}
\end{corollary}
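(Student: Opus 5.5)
We compute the distribution function of $\tau_n/n$ directly from the identity established in the proof of Proposition~\ref{prp}. For $k\le n$ we have $\PP(\tau_n\le k)=l_k/l_n$. So for $t\in(0,1)$,
\[
\PP\Big(\frac{\tau_n}{n}\le t\Big)=\PP(\tau_n\le [nt])=\frac{l_{[nt]}}{l_n}=\frac{L([nt])}{L(n)} .
\]
The three cases then reduce to the asymptotics of $L([nt])/L(n)$ as $n\to\infty$ with $t$ fixed. At $t=0$ and $t=1$ the value is trivial, so only $t\in(0,1)$ matters.

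\textbf{Case $0<c<\infty$.} The limit $\beta_{c,1}$ has density $c x^{c-1}$ on $[0,1]$, hence distribution function $t^c$. We need $L([nt])/L(n)\to t^c$. Since $L$ is monotone, $L(nt-1)\le L([nt])\le L(nt)$. By \eqref{eq3}, $L(nt)/L(n)\to t^c$. For the lower bound, fix $t'<t$; for large $n$ we have $nt-1\ge nt'$, so $\liminf L([nt])/L(n)\ge t'^c$. Letting $t'\uparrow t$ gives the limit. Hence the distribution functions converge pointwise to $t^c$ on $(0,1)$, which is the continuous distribution function of $\beta_{c,1}$, and convergence in distribution follows. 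Alternatively, this is Proposition~\ref{prp} composed with $f_n(t)/n\to t^{1/c}$, but the direct route is cleaner.

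\textbf{Case $c=0$.} Convergence in probability to the constant $0$ amounts to $\PP(\tau_n/n> t)\to0$ for every $t\in(0,1)$. This is $1-L([nt])/L(n)\to 0$. By the same sandwich, using monotonicity of $L$ and the slow variation \eqref{eq2} applied at $t$ and at $t'<t$, we get $L([nt])/L(n)\to1$.

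\textbf{Case $c=\infty$.} We need $\PP(\tau_n/n\le t)\to0$ for $t<1$. Here $L([nt])/L(n)\le L(nt)/L(n)\to0$ by \eqref{eq4} with $x=t<1$. Since $\tau_n\le n$, convergence in probability to $1$ follows.

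The only delicate point is passing from the continuous-variable limits \eqref{eq2}--\eqref{eq4} to the integer argument $[nt]$. This is handled by monotonicity of $L$ together with continuity of the limit function $t^c$ (respectively the constants $1$ and $0$). In the $c=\infty$ case the upper bound alone suffices, so no sandwich is needed there.
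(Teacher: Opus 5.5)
Your proof is correct. For $c=0$ and $c=\infty$ it coincides with the paper's argument: the paper simply writes $\PP(\tau_n/n\le t)=L(nt)/L(n)$ and invokes \eqref{eq2} or \eqref{eq4}.

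For $0<c<\infty$ you take a genuinely more direct route. The paper deduces the result from Proposition~\ref{prp}. It writes $\PP(\tau_n\le nt)=\PP(L(\tau_n)/L(n)\le L(nt)/L(n))$ and compares this with $L(nt)/L(n)$ via P\'olya's theorem. The uniform convergence of distribution functions to the continuous uniform one is needed there because the evaluation point moves with $n$. It then concludes with $L(nt)/L(n)\to t^c$. You instead use the exact formula $\PP(\tau_n\le k)=l_k/l_n$ and control the rounding by the monotone sandwich $L(nt')\le L([nt])\le L(nt)$.

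Your version gains several things:
\begin{itemize}
\item It is more elementary and self-contained: it needs neither Proposition~\ref{prp} (hence no regular variation of the inverse $f_n$ from Bingham et al.) nor P\'olya's theorem.
\item It only uses that $L$ is non-decreasing, whereas the paper's first equality tacitly requires $L$ strictly increasing.
\item It makes explicit the integer-part issue that the paper passes over. Indeed, once one notes $\PP(\tau_n\le nt)=L([nt])/L(n)$, the term the paper bounds via P\'olya is exactly $|L([nt])-L(nt)|/L(n)$, which your sandwich handles directly.
\end{itemize}
The paper's route has the advantage of presenting the statement genuinely as a corollary of Proposition~\ref{prp}, with the same P\'olya device reused in Corollary~\ref{asinf}.
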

\begin{proof} For  $ 0 < c < \infty, $ the statement is equivalent to, $\forall t\in [0,1]$,
    \[
    \mathbb{P}\left(\frac{\tau_n}{n}\leq t\right)\xrightarrow[n\to\infty]{}t^c
    \]
    Consider,
    \begin{align*}
        \left|\mathbb{P}\left(\frac{\tau_n}{n}\leq t\right)-t^c\right|&=\left|\mathbb{P}\left(\tau_n\leq nt\right)-t^c\right|
        \\&=\left|\PP\left(\frac{L(\tau_n)}{L(n)}\leq\frac{L(nt)}
        {L(n)}\right)-t^c\right|
        \\&\leq\left|\PP\left(\frac{L(\tau_n)}{L(n)}\leq\frac{L(nt)}
        {L(n)}\right)-\frac{L(nt)}
        {L(n)}\right|+\left|\frac{L(nt)}
        {L(n)}-t^c\right|
        \\&\leq\sup_{t\in\mathbb{R}}\left|\mathbb{P}\left(\frac{L(\tau_n)}{L(n)}\leq t\right)-t\right|+\left|\frac{L(nt)}
        {L(n)}-t^c\right|
        \xrightarrow[n\to\infty]{} 0
    \end{align*}
  by Pólya's theorem  which gives the result.
    
     For $ c = 0 $ $$ \mathbb{P}\left(\frac{\tau_n}{n}\leq t\right) =\frac{L(nt)}{L(n)}. 
     $$
     and slow variations yields convergence in distributions which is equivalent to convergence in probability. 
     
      For $ c = \infty $ fast increasing assumption yields the result. 
     
\end{proof}
\begin{proposition}
\label{prop:asym-ind}
If  $ 0 \le c \le + \infty,$ 
    $\tau_n/n$ and $\nu_{\tau_n}$ are asymptotically independent~:
    \begin{equation}
    \label{eq:asym-ind}
    \left (\frac{\tau_n}{n} , \nu_{\tau_n} \right ) \xrightarrow[n\to\infty]{d} \left ( X, \epsilon_q \right ) 
    \end{equation}
    where $ X \overset{d}{=} \beta_{c,1}, $ if $  0 < c < + \infty,$ $ X = 0, $ if $ c = 0,$ $ X= 1 $ if $ c = 1, $  and is independent of $ \epsilon_q $ a Bernoulli random variable with parameter $q.$ 
\end{proposition}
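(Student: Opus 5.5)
The plan is to compute the joint law of $(\tau_n,\nu_{\tau_n})$ exactly and then let $n\to\infty$. Fix $n$ and $1\le k\le n$. The event $\{\tau_n=k\}$ is $\{\epsilon_k=1,\epsilon_{k+1}=0,\dots,\epsilon_n=0\}$, so it depends only on $(\epsilon_j)_{j\le n}$. Since $\nu_k$ is independent of the whole sequence $\epsilon$,
\begin{equation*}
\PP\left(\tau_n=k,\ \nu_{\tau_n}=1\right)=\PP(\tau_n=k)\,q_k .
\end{equation*}
On $\{\tau_n=0\}$ we set $\nu_0=0$, say; that event has probability $l_0/l_n$, and this tends to $0$ whenever $l_n\to\infty$. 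If $l_n$ stays bounded (only possible when $c=0$), we only need $\tau_n/n\to0$, and the $\tau_n=0$ contribution goes into the limit with $X=0$ anyway. The care needed here is in checking $\nu$ on that event.

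Next, for $t\in[0,1]$ and $b\in\{0,1\}$, write
\begin{equation*}
\PP\left(\tfrac{\tau_n}{n}\le t,\ \nu_{\tau_n}=1\right)=\sum_{1\le k\le nt}\PP(\tau_n=k)\,q_k
= q\,\PP\left(\tfrac{\tau_n}{n}\le t\right)+\sum_{1\le k\le nt}\PP(\tau_n=k)(q_k-q).
\end{equation*}
The first term converges to $q\,F(t)$ by Corollary~\ref{dinf}, where $F$ is the distribution function of $X$ (namely $t^c$, $\mathbf 1_{t\ge 0}$ or $\mathbf 1_{t\ge1}$ in the three regimes). The case with $\nu_{\tau_n}=0$ follows by subtraction. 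Joint distribution functions at continuity points determine weak convergence on $[0,1]\times\{0,1\}$, so the limit is the product law of $X$ and $\epsilon_q$.

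The main step is to show the error sum vanishes. Fix $\eta>0$ and choose $K$ with $|q_k-q|\le\eta$ for $k>K$. Then the error is at most $\eta+\PP(\tau_n\le K)$. By Corollary~\ref{asinf}, $\PP(\tau_n\le K)=l_K/l_n\to0$ when $c>0$. For $c=0$ we need $l_n\to\infty$: if $l_n$ is eventually constant then $\epsilon_n=0$ eventually and $q_n$ is not even defined, so I assume $l_n\to\infty$, which is implicit in defining $q_n$ for a strictly increasing $l_n$. Under that assumption, $\PP(\tau_n\le K)=l_K/l_n\to0$ in every regime, and letting $\eta\to0$ finishes the proof. (In the statement, "$X=1$ if $c=1$" should read $c=\infty$, which is what Corollary~\ref{dinf} gives.)
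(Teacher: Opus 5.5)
Your argument is essentially the paper's own. You write $\PP(\tau_n/n\le t,\nu_{\tau_n}=x)$ as $\PP(\tau_n/n\le t)$ times the limiting Bernoulli probability, plus an error bounded by $\PP(\tau_n\le N)+\varepsilon$, and then use $\PP(\tau_n\le N)=l_N/l_n\to0$ together with Corollary~\ref{dinf}.

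One correction concerns $c=0$. The assumption $l_n\to\infty$ is not implied by $l_n$ being strictly increasing: for example, $l_n=2-1/(n+1)$ is strictly increasing, slowly varying and bounded. Without it the statement can genuinely fail. Then $\tau_n$ converges a.s.\ to a finite limit $\tau_\infty$, and $\nu_{\tau_n}$ eventually equals $\nu_{\tau_\infty}$, whose law mixes the $q_k$ rather than being Bernoulli$(q)$. So $l_n\to\infty$ is a real extra hypothesis, not something implicit. The paper's proof also uses it tacitly when it lets the error term vanish for $c=0$.
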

\begin{proof}

    We have, $q_n\xrightarrow[n\to\infty]{}q$. For $x\in\{0,1\}$, define $p_n(x)=q_n^x(1-q_n)^{1-x}, \ p(x)=q^x(1-q)^{1-x}$ and note that $|p_n(x)-p(x)|=|q_n-q|$. Then, given $\varepsilon>0$, $\exists N\in\mathbb{N}$ s.t. $\forall n\geq N$, $|q_n-q|<\varepsilon$. Fix $t \in [0,1],\ x \in \{0,1\}$. $
     \forall n$ s.t. $[nt]>N$,
\begin{align*}
    \mathbb{P}\left(\frac{\tau_n}{n}\leq t,\nu_{\tau_n}=x\right)
    &=\mathbb{P}\left(\tau_n\leq [nt],\nu_{\tau_n}=x\right)
    \\&=\sum_{k=0}^{[nt]}\mathbb{P}\left(\tau_n=k,\nu_{\tau_n}=x\right)
    \\&=\sum_{k=0}^{[nt]}\mathbb{P}\left(\tau_n=k\right)p_k(x)   \\&=\sum_{k=0}^{[nt]}\mathbb{P}\left(\tau_n=k\right)p(x)+\sum_{k=0}^{[nt]}\mathbb{P}\left(\tau_n=k\right)(p_k(x)-p(x))
    \\&=\mathbb{P}\left(\frac{\tau_n}{n}\leq t\right)p(x)+\sum_{k=0}^{N}\mathbb{P}\left(\tau_n=k\right)(p_k(x)-p(x))
    \\&+\sum_{k=N+1}^{[nt]}\mathbb{P}\left(\tau_n=k\right)(p_k(x)-p(x))
\end{align*}
which gives,
\begin{align*}
    \left|\mathbb{P}\left(\frac{\tau_n}{n}\leq t,\nu_{\tau_n}=x\right)-\mathbb{P}\left(\frac{\tau_n}{n}\leq t\right)p(x)\right|&\leq \sum_{k=0}^{N}\mathbb{P}\left(\tau_n=k\right)|q_k-q|\\
    &\phantom{\mathbb{P}\left(\frac{\tau_n}{n}\leq t,\right) }+\sum_{k=N+1}^{[nt]}\mathbb{P}\left(\tau_n=k\right)|q_k-q|\\
    &\leq \sum_{k=0}^{N}\mathbb{P}\left(\tau_n=k\right)+\sum_{k=N+1}^{[nt]}\mathbb{P}\left(\tau_n=k\right)\varepsilon
    \\&=\mathbb{P}\left(\tau_n\leq N\right)+\varepsilon
\end{align*}
If $ 0 < c < \infty, $  by Corollary \ref{asinf} and \ref{dinf}, $ \forall t \in [0,1],$ 
\begin{align*}
    &\limsup_{n\to\infty}\left|\mathbb{P}\left(\frac{\tau_n}{n}\leq t,\nu_{\tau_n}=x\right)-\mathbb{P}\left(\frac{\tau_n}{n}\leq t\right)p(x)\right|\leq\varepsilon\quad\forall\varepsilon>0
    \\\implies&\lim_{n\to\infty}\mathbb{P}\left(\frac{\tau_n}{n}\leq t,\nu_{\tau_n}=x\right)=\lim_{n\to\infty}\mathbb{P}\left(\frac{\tau_n}{n}\leq t\right)p(x)=t^cp(x)
\end{align*}
If $ c= 0,$ $ \lim_{n\to\infty}\mathbb{P}\left(\frac{\tau_n}{n}\leq t\right)p(x)=p(x), $ 
and if $ c = \infty, $ $ \lim_{n\to\infty}\mathbb{P}\left(\frac{\tau_n}{n}\leq t\right)p(x)= {\mathbf 1}_{t=1} p(x).$ 
\end{proof}
\section{Weak convergence of the empirical distributions}
\label{sec:weak}
In this section we prove the weak convergence of the empirical distributions in Theorem~\ref{thm:cv_p} which is a consequence of the limit in distribution of $ \frac{x_n}{n}.$ 
\begin{lemma}
\label{lem:LLN}
Fix a family of splitting proportions and boundary conditions $\bpn=(p_{n,k}:(n,k)\in I, a_{n,0}, a_{n,n+1}:n \in \mathbb{N})$ 
and consider the associated random process $(s_n)_{n\ge0}$ defined on $\O$ by \eqref{sn}.
    Suppose, $\exists \bar{p}\in[0,1]$ s.t. under $\PP^\bp$,
    \begin{equation}
    \label{cond}
        \frac{s_n}{n}\xrightarrow[n\to\infty]{P}\bar p
    \end{equation}
Then, as $n\to\infty$,
\[
g_n \xrightarrow[n\to\infty]{d} \begin{cases}
    \bar{p}\beta_{\left(1,\frac{1}{c}\right)}(0,1-q)+(1-\bar{p})\beta_{\left(\frac{1}{c},1\right)}(1-q,1) &\text{if }\ 0<c<\infty
    \\\bar{p}\d_0+(1-\bar{p}) \d_1 &\text{if }\ c=0
    \\\d_{1-q} &\text{if }\ c=\infty
\end{cases}
\; \text{weakly on }[0,1].
\]
\end{lemma}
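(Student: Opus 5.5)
The plan is to read $g_n$ as the image of an (almost) uniform variable under the distribution function of $x_n$, and then to identify the limit of $x_n/n$ through the representation \eqref{rep}.

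\textbf{Step 1: $g_n$ as a pushforward.} By Proposition~\ref{ehh}, $\varphi_n^{-1}(a_{n,k})=F_n(k-1)$, where $F_n(j)=\PP^\bp(x_n\le j)$. Hence
\[
g_n=\frac1n\sum_{k=1}^n\delta_{F_n(k-1)}
\]
is the law of $F_n(\lfloor nV\rfloor)=\tilde F_n(\lfloor nV\rfloor/n)$, with $V$ uniform on $[0,1)$ and $\tilde F_n$ the distribution function of $x_n/n$. Suppose $x_n/n\to Z$ in distribution and write $F_Z$ for the distribution function of $Z$. Then $\tilde F_n(\lfloor nV\rfloor/n)\to F_Z(V)$ almost surely on the event that $V$ is a continuity point of $F_Z$. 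This event has full measure because $F_Z$ has at most countably many jumps. So $g_n$ converges weakly to the law of $F_Z(V)$, and the rest of the proof identifies $Z$ and computes this law.

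\textbf{Step 2: limit of $x_n/n$.} By \eqref{rep},
\[
\frac{x_n}{n}=\frac{\tau_n}{n}\,\nu_{\tau_n}+\frac{s_n}{n}-\frac{s_{\tau_n}}{n}.
\]
The term $s_{\tau_n}$ is evaluated at a random time, and I expect this to be the main obstacle, since \eqref{cond} only gives convergence in probability at deterministic times. I will first upgrade \eqref{cond} to
\[
\sup_{t\in[0,1]}\left|\frac{s_{\lfloor nt\rfloor}}{n}-\bar p\,t\right|\xrightarrow[n\to\infty]{P}0 .
\]
To do this, fix a grid $j/m$, $0\le j\le m$. Condition \eqref{cond} applied along the deterministic times $\lfloor nj/m\rfloor$ gives joint convergence in probability at these finitely many points. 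Since $s_n$ is nondecreasing, monotonicity sandwiches $s_{\lfloor nt\rfloor}/n$ between consecutive grid values, which costs an error of at most $\bar p/m$ plus a vanishing term. Plugging in $t=\tau_n/n$ then yields
\[
\frac{x_n}{n}-\left(\frac{\tau_n}{n}\nu_{\tau_n}+\bar p\Bigl(1-\frac{\tau_n}{n}\Bigr)\right)\xrightarrow[n\to\infty]{P}0,
\]
whatever the dependence between $\tau_n$ and $s$. Proposition~\ref{prop:asym-ind} and the continuous mapping theorem then give $x_n/n\to Z$ in distribution, where
\[
Z=X\epsilon_q+\bar p(1-X),
\]
with $X$ independent of $\epsilon_q$ and $X\sim\beta_{c,1}$, $X=0$ or $X=1$ according as $0<c<\infty$, $c=0$ or $c=\infty$.

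\textbf{Step 3: computing the law of $F_Z(V)$.} The two degenerate cases are immediate.
\begin{itemize}
\item If $c=0$, then $Z=\bar p$ almost surely, so $F_Z(V)=\mathbf 1_{V\ge\bar p}$ almost surely, whose law is $\bar p\d_0+(1-\bar p)\d_1$.
\item If $c=\infty$, then $Z=\epsilon_q$, so $F_Z(V)=1-q$ for $V<1$, whose law is $\d_{1-q}$.
\end{itemize}
For $0<c<\infty$, use $\PP(X\le x)=x^c$ and split according to the value of $v$.
\begin{itemize}
\item For $v<\bar p$ only the branch $\epsilon_q=0$ contributes, and $F_Z(v)=(1-q)\bigl(1-(1-v/\bar p)^c\bigr)$.
\item For $v\ge\bar p$, $F_Z(v)=(1-q)+q\bigl((v-\bar p)/(1-\bar p)\bigr)^c$.
\end{itemize}
Conditionally on $V<\bar p$, which has probability $\bar p$, the variable $W=V/\bar p$ is uniform. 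A direct distribution-function computation shows that $1-(1-W)^c\sim\beta_{1,1/c}$, so $F_Z(V)\sim\beta_{1,1/c}(0,1-q)$ on this event. Conditionally on $V\ge\bar p$, which has probability $1-\bar p$, $W'=(V-\bar p)/(1-\bar p)$ is uniform and $W'^c\sim\beta_{1/c,1}$, so $F_Z(V)\sim\beta_{1/c,1}(1-q,1)$. Mixing the two conditional laws gives exactly the stated limit. The edge cases $\bar p\in\{0,1\}$ need only the convention $\beta_{r,s}(a,a)=\d_a$ and a trivial adjustment of the same computation.
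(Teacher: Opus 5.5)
Your proposal is correct and follows essentially the paper's route. The paper also reduces weak convergence of $g_n$ to convergence in distribution of $x_n/n$: its limiting distribution function for $g_n$ is the quantile function of that limit, which is exactly the law of $F_Z(V)$ in your coupling. It then uses \eqref{rep}, Proposition~\ref{prop:asym-ind} and Slutsky, and computes the mixture of beta laws. The one step where you do better is the random-time term: to handle $s_{\tau_n}/\tau_n$ the paper only cites $\tau_n\to\infty$ a.s., which is not enough by itself because $\tau_n$ and $(s_k)$ are dependent in general. Your monotonicity argument giving $\sup_{t}|s_{\lfloor nt\rfloor}/n-\bar p\,t|\to0$ in probability justifies that substitution properly and treats all regimes of $c$ at once.
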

\begin{proof}
Note that, $\forall t \in (0,1)$,
    \begin{align*}
        g_n([0,t])&=\frac{1}{n} \max\{k:\alpha_{n,k}\leq t\}
        \\&=\frac{1}{n} \max\{k:\PP^\bp(x_n\leq k-1)\leq t\}
        \\&=\frac{1}{n} \max\{k:\PP^\bp\left(x_n/n\leq (k-1)/n\right)\leq t\}.
     \end{align*}
     
Since  $F_n^\bp(x)=\PP^\bp\left(x_n/n\leq x\right)$ is constant on $ [\frac{k-1}{n},\frac{k}{n}) $ for $ k =1 $ to $ n, $ if $ k_0= \max\{k:F_n^\bp(\frac{k-1}{n}) \le t \}, $ 
$ \sup\{x: F_n^\bp(x) \le t \} = \frac{k_0+ 1}{n} $
and $$  g_n([0,t]) = \sup\{x: F_n^\bp(x) \le t \}  - \frac{1}{n}. $$ 
Hence to prove weak convergence of $ g_n $ it is enough to show convergence in distribution of $ \frac{x_n}{n}.$ Actually if $ F_n^\bp(x) \to F $ for every continuity point of $ F, $  $g_n([0,t])\rightarrow Q(t)$, where $Q$ is the right-continuous quantile function for $F.$
        

Let us show the convergence in distribution of $ \frac{x_n}{n} $ using the representation \eqref{rep},
    \begin{align}
    \frac{x_n}{n}&= \frac{\tau_n}{n}  \nu_{\tau_n}+\frac{s_n-s_{\tau_n}}{n} \notag 
    \\&=\frac{\tau_n}{n}  \nu_{\tau_n}+ \frac{s_n}{n} - \frac{\tau_n}{n} \frac{s_{\tau_n}}{\tau_n} \label{eq:frac}
\end{align}
If $ 0 < c \le  \infty,$  under condition \eqref{cond}, using Corollary \ref{asinf}, Proposition~\ref{prop:asym-ind},    the fact that $\tau_n/n \le 1$, and Slutsky lemma
\begin{equation}
\label{whoa}
     \frac{x_n}{n} \xrightarrow{d} X \epsilon_q + \bar p - X \bar p. 
\end{equation}
The distribution of $ X \epsilon-q + \bar p - X \bar p$ given $ \epsilon_q= 1$  is $ \beta_{c,1}(\bar p,1) $ and given $ \epsilon_q= 0$ it is  $  \beta_{1,c}(0,\bar p). $ Hence $ \frac{x_n}{n} $ converges to a distribution with density $$ (1-q)\beta_{(1,c)}(0,\bar{p})+q\beta_{(c,1)}(\bar{p},1) $$ when $ \bar p \in (0,1).$ When  $ \bar p \in \{0,1\}, $ the limit may include an atom, which is specified by $ \beta_{r,s}(a,a) = \delta_a.$ The computation of the quantile $ Q $ yields Lemma~\ref{lem:LLN} when $ c >0.$



    For $ c=0,$ Proposition~\ref{prop:asym-ind}~ yields $ \frac{\tau_n}{n} \xrightarrow{P} 0.$ Moreover~\eqref{cond} and $ 0 \le s_{\tau_n} \le s_n, $ yields $ \frac{s_n}{n} - \frac{\tau_n}{n} \frac{s_{\tau_n}}{\tau_n} \xrightarrow{P} \bar p,$  hence  $$ \frac{x_n}{n} \xrightarrow{P} \bar p $$ 
    which yields  Lemma~\ref{lem:LLN} when $ c = 0.$ 


\end{proof}
\begin{theorem}	\label{thm:cv_p}
\textup{(a)} {\bf Deterministic stratified fragmentation}. 
Assume that the averages $\frac1n\sum_{k=1}^np_k$ converge as $n\to\infty$, with limit $\bar p$ say.
Then, as $n\to\infty$, 
$$ 
g_n \xrightarrow[n\to\infty]{d} \begin{cases}
    \bar{p}\beta_{\left(1,\frac{1}{c}\right)}(0,1-q)+(1-\bar{p})\beta_{\left(\frac{1}{c},1\right)}(1-q,1) &\text{if }\ 0<c<\infty
    \\\bar{p}\d_0+(1-\bar{p}) \d_1 &\text{if }\ c=0
    \\\d_{1-q} &\text{if }\ c=\infty
\end{cases}
\; \text{weakly on }[0,1].
$$
\textup{(b)} {\bf Random stratified fragmentation and fully random fragmentation}. 
Define $\bar p=\EE(P_1)$ or $\bar p=\EE(P_{1,1})$ according to the case in hand.
Then, almost surely, as $n\to\infty$, 
$$
G_n \xrightarrow[n\to\infty]{d}   \begin{cases}
    \bar{p}\beta_{\left(1,\frac{1}{c}\right)}(0,1-q)+(1-\bar{p})\beta_{\left(\frac{1}{c},1\right)}(1-q,1) &\text{if }\ 0<c<\infty
    \\\bar{p}\d_0+(1-\bar{p}) \d_1 &\text{if }\ c=0
    \\\d_{1-q} &\text{if }\ c=\infty
\end{cases}
\;\text{weakly on }[0,1].
$$
\end{theorem}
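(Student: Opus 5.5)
The plan is to reduce everything to Lemma~\ref{lem:LLN}: in each case it suffices to verify condition~\eqref{cond}, i.e.\ that $s_n/n\to\bar p$ in probability under $\PP^\bp$. For random proportions this must hold for $\PP$-almost every realization of $\bP$. The identity $G_n=g_n\bP$ then transfers the lemma's conclusion to $G_n$ almost surely.

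For part (a), stratified means $p_{n,k}=p_n$ for every $k$. The increment $y_{n,x_{n-1}+1}$ is therefore a Bernoulli$(p_n)$ variable regardless of the value of $x_{n-1}$. I would verify this by conditioning on $\sigma(y_{m,\cdot},\nu_m,\epsilon_m : m\le n-1)$ together with $\epsilon_n,\nu_n$. The variables $(y_{n,k})_k$ are independent of this $\sigma$-field and all have law Bernoulli$(p_n)$, so $y_{n,x_{n-1}+1}$ is Bernoulli$(p_n)$ independent of the past. Hence $s_n$ is a sum of independent Bernoulli$(p_k)$ variables. Then $\EE(s_n/n)=\frac1n\sum_{k\le n}p_k\to\bar p$ and $\mathrm{Var}(s_n/n)\le 1/(4n)\to0$, so Chebyshev gives~\eqref{cond}. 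Lemma~\ref{lem:LLN} then yields (a) directly.

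For the random stratified case in (b), conditionally on $\bP$ we are in case (a) with $p_n=P_n$. By the strong law of large numbers, $\frac1n\sum P_k\to\EE P_1=\bar p$ almost surely. The boundary conditions are assumed almost surely of the right variation type with $Q_n\to Q$. So for almost every $\omega$ the deterministic parameters $\bP(\omega)$ satisfy the hypotheses of (a), and $G_n(\omega)=g_n(\bP(\omega))$ converges as claimed.

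The fully random case is the main obstacle, since $y_{n,x_{n-1}+1}$ now has conditional mean $P_{n,x_{n-1}+1}$, which depends on the path. I would work under the joint law $\PP^*$. Set $\mathcal G_{n-1}=\sigma(\bP$ restricted to rows $<n$, boundary data, $y,\nu,\epsilon$ up to step $n-1$, $\epsilon_n,\nu_n)$. The row $(P_{n,k})_k$ is i.i.d.\ and independent of $\mathcal G_{n-1}$, while the index $x_{n-1}+1$ is $\mathcal G_{n-1}$-measurable. Hence $Y_{n,X_{n-1}+1}$ is Bernoulli$(\bar p)$ and independent of $\mathcal G_{n-1}$. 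So under $\PP^*$, $s_n$ is Binomial$(n,\bar p)$ and $s_n/n\to\bar p$ almost surely under $\PP^*$ by the strong law. This is an annealed statement, however, and Lemma~\ref{lem:LLN} needs the quenched one. To bridge the gap I would argue as follows. Almost sure convergence under $\PP^*$ means $\PP^*(\sup_{m\ge n}|s_m/m-\bar p|>\eta)\to0$. This probability equals $\EE^*[\phi_n\bP]$, where $\phi_n\bp=\PP^\bp(\sup_{m\ge n}|s_m/m-\bar p|>\eta)$ by~\eqref{phiPhi}. The $\phi_n$ are nonincreasing in $n$ and in $[0,1]$, so $\phi_n\bP\downarrow\phi_\infty$ with $\EE^*\phi_\infty=0$. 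Hence $\phi_n\bP\to0$ almost surely, for each rational $\eta$. This gives~\eqref{cond}, indeed almost sure convergence, under $\PP^{\bP(\omega)}$ for almost every $\omega$. Lemma~\ref{lem:LLN} then applies pathwise. The one point needing care is that $\tau_n$ and the $\nu$'s play no role here, since the statement only concerns $s_n$.
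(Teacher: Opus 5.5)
Your proposal is correct and follows the paper's proof. Everything is reduced to Lemma~\ref{lem:LLN}. Part (a) comes from a concentration bound for a sum of independent Bernoulli$(p_k)$ variables (you use Chebyshev where the paper uses Hoeffding). Part (b) comes from the annealed strong law for $S_n$ under $\PP^*$, transferred to the quenched laws via~\eqref{phiPhi}.

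The differences are cosmetic. You handle the random stratified case by conditioning on the proportions and reusing (a), whereas the paper treats both random cases the same way. You spell out the filtration argument showing $S_n$ is a Bernoulli$(\bar p)$ random walk under $\PP^*$, which the paper only asserts. You do the quenched transfer with the monotone sequence $\phi_n$ rather than directly with $\Phi=\mathbf{1}_{\{s_n/n\to\bar p\}}$.
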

Please remark that in the special case $ c=1, $ the beta distributions are actually uniform distributions. 
Moreover if $ q= \bar{p} = \frac12, $ the empirical distributions of the break points converges to a uniform distribution on $[0,1].$ 
\begin{proof}
For part (a), under $\PP^\bp$, the process $(s_n)_{n\ge0}$ is a sum of independent Bernoulli random variables, 
with success probabilities $p_1,p_2,\dots$
Then, by Hoeffding's inequality, for all $\epsilon>0$, 
\begin{align*}
    &\PP^\bp\left(\left|\frac{s_n}n-\frac1n\sum_{k=1}^np_k\right|\ge\epsilon\right)
\le2\exp\left\{-2n\epsilon^2\right\}
\\\implies&\frac{s_n}{n}\xrightarrow[n\to\infty]{P}\bar p
\end{align*}
and Lemma \ref{lem:LLN} applies.
\\For part (b), note that, in both cases,  under $ \PP^*,$ 
the process $(S_n)_{n\ge0}$ is a random walk with mean step size $\bar p$,
so $S_n/n\to\bar p$ almost surely by the strong law of large numbers.
Set
$$
\Phi(y,\nu,\epsilon)=1_{\{s_n/n\to\bar p\text{ as }n\to\infty\}}
$$
then
$$
\phi(\bp)=\EE^\bp(\Phi(y,\nu,\epsilon))=\PP^\bp(s_n/n\to\bar p\text{ as }n\to\infty).
$$
By~\eqref{phiPhi}, almost surely,
$$
\phi \bP =\PP^*(S_n/n\to\bar p\text{ as }n\to\infty|\bP).
$$
Hence $\phi(\bP)=1$ almost surely. Consider the event $\tilde \O=\{\bPn\in B\}$, where
$$
B=\{\bp:\frac{s_n}{n}\xrightarrow[n\to\infty]{P}\bar p \text{ under }\PP^\bp\}.
$$
Since almost sure convergence implies in probability convergence,
\[
\PP^*(\tilde \O)\geq\PP^*(\phi(\bP)=1)=1
\]
By Lemma \ref{lem:LLN}, for $\bp\in B$, we have the weak limit for $g_n(\bp)$ as $n\to\infty$.
Then, it is clear that, 
\[G_n \xrightarrow[n\to\infty]{d}  \begin{cases}
    \bar{p}\beta_{\left(1,\frac{1}{c}\right)}(0,1-q)+(1-\bar{p})\beta_{\left(\frac{1}{c},1\right)}(1-q,1) &\text{if }\ 0<c<\infty
    \\\bar{p}\d_0+(1-\bar{p}) \d_1 &\text{if }\ c=0
    \\\d_{1-q} &\text{if }\ c=\infty
\end{cases}
\; \text{weakly on }[0,1].\]

\end{proof}

 If $ c= 0 $ we may think of Theorem~\ref{thm:cv_p} as a generalization of Theorem 3.2 in~\cite{Cohen25} when $ l_n$ is not constant but slowly varying. If $ 0 < c < \infty $ and $ 0 < q < 1 $  a new feature appears since the limit distribution has a density with respect to Lebesgue measure. Let us also remark that Theorem~\ref{thm:cv_p} yields partial information on the limit distribution of the $(a_{n,k})_{0 \le k \le n+1} $ on the real line. It is a consequence of the fact that when $ c >0 $ $ \varphi_n $ is $l_n$-Lipschitz 
 and $ l_n \to \infty.$ In some cases we can be more precise.
 \begin{lemma}
 \label{lem:reg}
  Let us assume $ \forall n, \; k, \; p_{n,k} = p \in (0,1), $ and, for $ r > 0, $ take $ a_{n,0}= -r p (n+1) $ and $ a_{n,n+1}= r (1-p) (n+1).$ Then  $$ \forall n \ge 1, \forall 0 \le k \le n+1, a_{n,k} = r ( - p (n+1) + k).$$ 
 \end{lemma}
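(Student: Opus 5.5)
Induction on $n$ is the natural route, since the claimed formula is explicit and the recursion~\eqref{eqind} is affine in the previous break points. For every $n\ge 0$, set $b_{n,k}=r(-p(n+1)+k)$ for $0\le k\le n+1$. We prove $a_{n,k}=b_{n,k}$ for all such $k$.

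The boundary points hold by definition: $b_{n,0}=-rp(n+1)=a_{n,0}$ and $b_{n,n+1}=r(-p(n+1)+n+1)=r(1-p)(n+1)=a_{n,n+1}$. Note that the boundary sequences are indeed non increasing and non decreasing respectively, since $r>0$ and $p\in(0,1)$. In particular the base case $n=0$ holds, because $\calP_0$ has only the two points $a_{0,0}=-rp$ and $a_{0,1}=r(1-p)$.

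For the inductive step, assume $a_{n-1,j}=b_{n-1,j}$ for $0\le j\le n$. For $1\le k\le n$, both $k-1$ and $k$ lie in $\{0,\dots,n\}$, so the hypothesis applies to both neighbours, and~\eqref{eqind} gives
\begin{align*}
a_{n,k}&=p\,r(-pn+k-1)+(1-p)\,r(-pn+k)\\
&=r(-pn+k-p)\\
&=r(-p(n+1)+k)=b_{n,k}.
\end{align*}
Combined with the boundary identities, this gives $a_{n,k}=b_{n,k}$ for all $0\le k\le n+1$, which closes the induction.

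There is no real obstacle; the only point needing care is checking that the boundary indices $k=1$ and $k=n$ use $a_{n-1,0}$ and $a_{n-1,n}$, which are covered by the induction hypothesis together with the prescribed boundary values. This also explains the choice of boundary conditions: they are exactly the ones that make the break points equally spaced with step $r$.
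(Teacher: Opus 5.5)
Your proof is correct and follows the same route as the paper, which only remarks that the lemma is checked by induction on $n$. Your computation $p\,r(-pn+k-1)+(1-p)\,r(-pn+k)=r(-p(n+1)+k)$, together with the two boundary identities, is exactly that induction written out.
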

 It is straightforward to  check the Lemma by induction. This setting will be called the regular case. 
 It  is consistent with Theorem~\ref{thm:cv_p}. Here $l_n $ is regularly varying with index $ c = 1 $ and $ q_n= 1 - p.$ Actually in this case $ \alpha_{n,k}= \varphi_n^{-1}(a_{n,k}) = \frac{k}{n+1} $ which is  more accurate than  the convergence of   Theorem~\ref{thm:cv_p}. The next section investigates what happens when we introduce a small  perturbation of this regular case. 
 
\section{Generating function approach}
\label{sec:gen}

Let us assume  $ \forall n, \; k, \; p_{n,k} = p \in (0,1), $ $ a_{n,0}= - \frac{n+1}{2},  $ and $ a_{n,n+1}=  \frac{n+1}{2}. $ Hence $ l_n = n+1, q_n = \frac12 $ and $ p = \frac12$ is nothing else than the regular case for $ r = 2. $ In this section we consider also the case $ p \neq \frac12, $ which introduces asymmetry in the splitting of the intervals. The consequence of this asymmetry is the following proposition, where we observe a difference in the behavior of the break points on the left and on the right of $[0,1].$

\begin{proposition}
\label{prop:perturb-reg} If  $\forall (n,k)\in I \; p_{n,k} = p \in (0,1), $ $ a_{n,0}= - \frac{n+1}{2},  $ and $ a_{n,n+1}=  \frac{n+1}{2} $ then $ \forall k \in \NN,$
\begin{equation}
\label{eq:dl1}
\alpha_{n,k+1} = \alpha_{n,k} + \frac{1}{2p} + o \left ( \frac{1}{n+1} \right )
\end{equation}
and 
\begin{equation}
\label{eq:dl2}
\alpha_{n,n -k +1 } = \alpha_{n,n-k} + \frac{1}{2(1-p)} + o \left ( \frac{1}{n+1} \right ).
\end{equation}
\end{proposition}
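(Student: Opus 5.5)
Throughout, I read \eqref{eq:dl1} and \eqref{eq:dl2} with the increment normalised by $l_n=n+1$:
$$
(n+1)\bigl(\alpha_{n,k+1}-\alpha_{n,k}\bigr)\to\frac{1}{2p},\qquad (n+1)\bigl(\alpha_{n,n-k+1}-\alpha_{n,n-k}\bigr)\to\frac{1}{2(1-p)}
$$
for each fixed $k$. Equivalently, $a_{n,k+1}-a_{n,k}\to \frac1{2p}$ near the left end and $\frac1{2(1-p)}$ near the right end. The plan is to use the representation of Section~\ref{sec:char}, which gives
$$
\alpha_{n,k+1}-\alpha_{n,k}=\PP^\bp(x_n=k),
$$
and to compute this point probability asymptotically through \eqref{rep}.

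\textbf{Step 1: the law of $\tau_n$ and the structure of $x_n$.} Since $l_j=j+1$, we have $\PP(\epsilon_j=1)=1-\frac{j}{j+1}=\frac{1}{j+1}$. Using $\PP(\tau_n\le j)=l_j/l_n$ from the proof of Proposition~\ref{prp}, we get $\PP(\tau_n=j)=\frac{1}{n+1}$ for every $0\le j\le n$. So $\tau_n$ is uniform on $\{0,\dots,n\}$.

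Because $p_{n,k}\equiv p$, the increments $y_{k,x_{k-1}+1}$ of $s_n$ are i.i.d.\ Bernoulli$(p)$. They are independent of $(\epsilon,\nu)$, hence $s_n-s_{\tau_n}$ given $\tau_n=j$ is Binomial$(n-j,p)$. The variable $\nu_{\tau_n}$ is an independent Bernoulli$(\frac12)$ when $\tau_n\ge1$. When $\tau_n=0$ the term $\tau_n\nu_{\tau_n}$ vanishes, and this single index only affects an $O(1/(n+1))$ weight with a bounded factor.

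Hence, writing $m=n-j$ and $B_m\sim$ Bin$(m,p)$,
$$
\PP(x_n=k)=\frac{1}{n+1}\sum_{m=0}^{n}\Bigl[\tfrac12\PP(B_m=k)+\tfrac12\PP(n-m+B_m=k)\Bigr]+O\Bigl(\frac{1}{n+1}\Bigr)\cdot\eta_n ,
$$
where $\eta_n\to0$ collects the exponentially small contribution of the $\tau_n=0$ correction.

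\textbf{Step 2: generating function sums.} For fixed $k$, the negative binomial identity $\sum_{m\ge k}\binom{m}{k}x^{m-k}=(1-x)^{-k-1}$ at $x=1-p$ gives
$$
\sum_{m\ge0}\PP(B_m=k)=\sum_{m\ge k}\binom mk p^k(1-p)^{m-k}=\frac1p .
$$
The truncation at $m\le n$ therefore converges to $\frac1p$, since the series converges.

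The second sum has nonzero terms only when $n-m\le k$, i.e.\ for at most $k+1$ values of $m$ close to $n$. Each such term is $\PP(B_m=k-(n-m))\le \PP(B_m\le k)$ with $m\ge n-k$, which is exponentially small in $n$. So the second sum is $o(1)$, and multiplying by $(n+1)$ gives \eqref{eq:dl1}.

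\textbf{Step 3: the right end, and the main obstacle.} For \eqref{eq:dl2}, apply the same computation to $\PP(x_n=n-k)$. Now the branch $\nu_{\tau_n}=0$ requires $B_m=n-k$ with $m\le n$, i.e.\ at most $k$ failures among $m\ge n-k$ trials, which is exponentially small. The branch $\nu_{\tau_n}=1$ requires $m-B_m=k$, i.e.\ exactly $k$ failures. The sum over $m$ equals $\frac{1}{1-p}$ by the same identity with $p$ and $1-p$ exchanged, giving $\frac{1}{2(1-p)}$.

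The only delicate points are:
\begin{itemize}
\item the exact uniformity of $\tau_n$, which uses the specific choice $l_n=n+1$;
\item the independence of the $y$'s from $x$, which holds because the splitting proportion is constant;
\item checking that the boundary index $\tau_n=0$ and the tails of the truncated series are genuinely $o(1)$ after the factor $n+1$.
\end{itemize}
Each of these follows from the exponential Binomial tail bounds together with the summability of the negative binomial series.
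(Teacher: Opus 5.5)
Your argument is correct, and it takes a different route from the paper.

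\textbf{The paper's route.} The paper does not use the representation \eqref{rep} for this proposition. It sets $H_n(k)=(n+1)\PP(X_n=k)$ and derives from the recursion for $\alpha_{n,k}$:
\[
H_n(k)=pH_{n-1}(k-1)+(1-p)H_{n-1}(k),\quad 1\le k\le n-1,
\]
\[
H_n(0)=\tfrac12+(1-p)H_{n-1}(0),\qquad H_n(n)=\tfrac12+pH_{n-1}(n-1).
\]
Hence $\mathcal H_n(s)=\sum_k H_n(k)s^k$ satisfies $\mathcal H_n(s)=\frac12(1+s^n)+[(1-p)+ps]\mathcal H_{n-1}(s)$. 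The paper then shows $\mathcal H_n(s)\to\frac{1}{2p(1-s)}$ for $|s|<1$, reads off $H_n(k)\to\frac1{2p}$, and gets the right end from the symmetry $k\mapsto n-k$, $p\mapsto 1-p$.

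\textbf{Your route.} You use that $l_n=n+1$ makes $\tau_n$ exactly uniform on $\{0,\dots,n\}$. You also use that the $y$-increments are i.i.d.\ Bernoulli$(p)$ and independent of $(\epsilon,\nu)$. This gives the closed form
\[
(n+1)\PP(x_n=k)=\tfrac12\sum_{m=0}^{n}\PP(B_m=k)+\tfrac12\sum_{m=0}^{n}\PP(n-m+B_m=k),\qquad B_m\sim\mathrm{Bin}(m,p).
\]
Its generating function in $k$ is $\frac12\sum_{m\le n}r^m+\frac12\sum_{m\le n}s^{n-m}r^m$ with $r=1-p+ps$. This is exactly the solution of the paper's recursion with $\mathcal H_0=1$, so the two proofs compute the same quantity.

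\textbf{What each buys.} Your approach makes coefficient extraction explicit. The paper goes from pointwise convergence of $\mathcal H_n$ on the open disc to convergence of each coefficient without comment; that step needs locally uniform convergence plus Cauchy's estimates. Both ends also come out of one negative-binomial computation rather than a symmetry argument. You get, for fixed $k$, an exponentially small error rather than just $o(1)$. The paper's route works purely from the deterministic recursion, so it never has to check the independence structure of $(\tau_n,\nu_{\tau_n},s_n-s_{\tau_n})$.

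\textbf{Two small remarks.}
\begin{enumerate}
\item Your correction term $\eta_n$ is unnecessary. For $m=n$ (i.e.\ $\tau_n=0$), both terms in your bracket equal $\PP(B_n=k)$, so your formula is exact.
\item Your reading of \eqref{eq:dl1}--\eqref{eq:dl2}, with the increment multiplied by $n+1$, is exactly what the paper's proof establishes, namely $H_n(k)\to\frac1{2p}$. The displayed statement simply dropped that factor.
\end{enumerate}
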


 The Proposition is consistent with Theorem~\ref{thm:cv_p}, here $ c =1,\; \bar{p}=p $ but more precise since it claims that the break points close to $0$ or $ 1$ are in arithmetic progression up to $  o \left ( \frac{1}{n+1} \right ).$ This result cannot be true uniformly in $ k $ and we postpone the study of the intriguing behavior of $ \alpha_{n,k} $ when $ k \sim [\frac{n}{2}] $  to a further study. 
\begin{proof}

In this setting  $ (Y_n)_{n \in \mathbb N}  $ are i.i.d. Bernoulli random variables with parameter $ p,$ 
$ (\nu_n)_{n \in \mathbb N}  $ are i.i.d. Bernoulli random variables with parameter $ 1/2,$ $\epsilon_n$  i.i.d. Bernoulli random variables with parameter $ \frac{1}{n+1},$ all independent, 
the  Markov chain~\eqref{xn} $(X_n)_{n\geq0}$  can be written  $X_0=0$ and
\(
X_n=n\epsilon_n \nu_n +(1-\epsilon_n)(X_{n-1}+Y_{n}).
\)
It has been previously shown that
\[
\alpha_{n,k}=\mathbb{P}(X_n\leq k-1)=\frac{1}{2(n+1)}+\frac{n}{n+1}[p\alpha_{n-1,k-1}+(1-p)\alpha_{n-1,k}]
\]
Define $H_n(k)=(n+1)\mathbb{P}(X_n= k)$.
\begin{align}
     H_n(k)&=(n+1)(\alpha_{n,k+1}-\alpha_{n,k}) \notag
    \\&=(n+1)\frac{n}{n+1}[p(\alpha_{n-1,k}-\alpha_{n-1,k-1})+(1-p)(\alpha_{n-1,k+1}-\alpha_{n-1,k})] \notag
    \\&=pH_{n-1}(k-1)+(1-p)H_{n-1}(k) \quad \forall1\leq k \leq n-1 \label{Hnk}
    \\H_n(0)&=(n+1)(\alpha_{n,1}-\alpha_{n,0}) \notag
    \\&=(n+1)\alpha_{n,1} \notag
    \\&=\frac{n+1}{2(n+1)}+\frac{n(n+1)}{n+1}[p\alpha_{n-1,0}+(1-p)\alpha_{n-1,1}] \notag
    \\&=\frac{1}{2}+(1-p)n\alpha_{n-1,1} \notag
    \\&=\frac{1}{2}+(1-p)H_{n-1}(0) \label{Hn0}
    \\H_n(n)&=(n+1)(\alpha_{n,n+1}-\alpha_{n,n}) \notag
    \\&=(n+1)(1-\alpha_{n,n}) \notag
    \\&=(n+1)-\frac{n+1}{2(n+1)}-\frac{n(n+1)}{n+1}[p\alpha_{n-1,n-1}+(1-p)\alpha_{n-1,n}] \notag
    \\&=\frac{1}{2}+pn(1-\alpha_{n-1,1}) \notag 
    \\&=\frac{1}{2}+pH_{n-1}(n-1). \label{Hnn}
\end{align}
Now, let $\mathcal{H}_n(s)=\sum_{k=0}^nH_n(k)s^k$. Then,
\begin{align*}
    \mathcal{H}_n(s)&=\sum_{k=0}^nH_n(k)s^k
    \\&=H_n(0)+\sum_{k=1}^{n-1}H_n(k)s^k+H_n(n)s^n
    \\&=\frac{1}{2}+(1-p)H_{n-1}(0)+\sum_{k=1}^{n-1}[pH_{n-1}(k-1)+(1-p)H_{n-1}(k)]s^k\\
    &\phantom{\frac{1}{2}+(1-p)H_{n-1}(0)+}+[\frac{1}{2}+pH_{n-1}(n-1)]s^n
    \\&=\frac{1}{2}(1+s^n)+(1-p)\sum_{k=0}^{n-1}H_{n-1}(k)s^k+ps\sum_{k=0}^{n-1}H_{n-1}(k)s^k
    \\&=\frac{1}{2}(1+s^n)+[(1-p)+ps]\mathcal{H}_{n-1}(s).
\end{align*}
Now,
\[
\frac{1}{2p(1-s)}=\frac{1}{2}+[(1-p)+ps]\frac{1}{2p(1-s)}.
\]
So, let
\[
\mathcal{C}_n(s)=\mathcal{H}_n(s)-\frac{1}{2p(1-s)}.
\]
Then,
\begin{align*}
    &\mathcal{C}_n(s)\\&=\frac{1}{2}s^n+[(1-p)+ps]\mathcal{C}_{n-1}(s)
    \\&=\frac{1}{2}\sum_{k=0}^{n-1}s^{n-k}[(1-p)+ps]^k+[(1-p)+ps]^n\mathcal{C}_0(s)
    \\&=\frac{(s^n-[(1-p)+ps]^n)}{2(1-\frac{[(1-p)+ps]}{s})}+[(1-p)+ps]^n\mathcal{C}_0(s) \xrightarrow[n\to\infty]{}0 \quad \forall|s|<1
    \\& \implies \mathcal{H}_n(s)\xrightarrow[n\to\infty]{}\frac{1}{2p(1-s)}\quad \forall|s|<1.
\end{align*}
So that, $\forall k\in\mathbb{N}$,
\[
H_n(k)\xrightarrow[n\to\infty]{}\frac{1}{2p}. 
\]
One can also show that 
$\forall k\in\mathbb{N}$,
\[
H_n(n-k)\xrightarrow[n\to\infty]{}\frac{1}{2(1-p)}
\]
by the symmetry $ k \mapsto n-k, $ $ p \mapsto 1-p $ in the equations~\eqref{Hnk}~\eqref{Hn0}~\eqref{Hnn}.
\end{proof}
\section{Full Random Case (Special setup)}
\label{sec:rand-expan}
In this section we allow $ A_{n,0} $ and $ A_{n, n+1} $ to be random and we consider 
a special case in the same spirit as in the previous proposition. 

Let us first introduce some notations for exponential and gamma distributions. 
\begin{definition}
For $ p >0, \; \theta >0, $ a random variable has a distribution $\Gamma_{p, \theta} $ if its density 
with respect to the Lebesgue measure  $ dx $ on $ \RR^+$ is 
$$ \gamma_{p, \theta}(x)=  \frac{\theta^p e^{-\theta x} x^{p-1}{\mathbf 1}_{x \in [0,+\infty)}}{\Gamma(p)}.$$
For $ \theta =1,$ we shorten the notation to $\Gamma_{p}.$ Classicaly for $ p = 1 $ $ \Gamma_{1, \theta} $
are exponential random variables with parameter $ \theta.$ 
\end{definition}

We begin with the following collections of random variables,
\[
(E_n)_{n\geq0}, \ (E'_n)_{n\geq0}
\]
two independent sequences of i.i.d. exponential random variables with parameter $1 $ 
defined on $ \Omega^* $. These sequences are assumed be independent of 
 the  $(U_{n,k}) $ and of the $(P_{n,k}) $ which are moreover assumed to be uniformly distributed on $ [0,1].$  
Define
\[
S_n=\sum_{k=0}^nE_k,\quad S'_n=\sum_{k=0}^nE'_k
\]
Now, the breakpoints for the partitions are defined as
\begin{align}
    &A_{n,0}=-S'_n,\quad A_{n,n+1}=S_n, \quad \forall n\geq0 \label{def:point-rand-expan}
    \\&A_{n,k}=P_{n,k}A_{n-1,k-1}+(1-P_{n,k})A_{n-1,k}, \quad \forall n\geq1, \ 1\leq k\leq n. \notag
\end{align}

Let us also denote by 
\begin{equation}
\label{eq:rand-mea}
\tilde{\mu}_n = \sum_{k=1}^n \delta_{A_{n,k}}. 
\end{equation}
that will be studied in section~\ref{sec:pp}.

Let $I_{n,k}=A_{n,k+1}-A_{n,k}, \ \forall n\geq0, \ 0\leq k\leq n$.

\begin{proposition}
Almost surely under $\PP^* $  $\forall n\geq0,\quad (I_{n,k})_{0\leq k \leq n} $ are independent $\Gamma_2, $ random variables. 

\end{proposition}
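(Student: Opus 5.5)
Induction on $n$ is the natural approach, and the statement to carry is slightly stronger: conditionally on the past, the spacings at step $n$ are i.i.d.\ $\Gamma_2$ and independent of the fresh variables used at step $n+1$. The fresh variables are $E_{n+1}$, $E'_{n+1}$ and $(P_{n+1,k})_{1\le k\le n+1}$. The base case $n=0$ gives a single spacing $I_{0,0}=A_{0,1}-A_{0,0}=E_0+E'_0$, which is $\Gamma_2$ as the sum of two independent exponentials of parameter $1$.

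For the induction step, write the step-$n$ spacings in terms of the step-$(n-1)$ spacings. From the recursion, for $1\le k\le n$,
\[
A_{n,k+1}-A_{n,k}=(1-P_{n,k+1})\,I_{n-1,k}\;+\;P_{n,k}\,I_{n-1,k-1}\quad(1\le k\le n-1).
\]
Here the right piece of the $(k-1)$th old interval is merged with the left piece of the $k$th old interval. The two boundary spacings are
\[
I_{n,0}=E'_n+P_{n,1}I_{n-1,0},\qquad I_{n,n}=E_n+(1-P_{n,n})I_{n-1,n-1}.
\]
The key input is the classical beta--gamma algebra. If $I\sim\Gamma_2$ and $P\sim u(0,1)$ are independent, then $PI$ and $(1-P)I$ are independent $\Gamma_1$ (exponential) variables. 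This holds because a $\Gamma_2$ variable split by an independent $\beta_{1,1}$ proportion yields independent $\Gamma_1$ and $\Gamma_1$ pieces; one checks it by the change of variables $(p,x)\mapsto(px,(1-p)x)$, whose Jacobian $x$ cancels the $x$ in $\gamma_{2,1}$.

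Apply this to each old interval $I_{n-1,k}$, $0\le k\le n-1$, using its own independent $P_{n,k+1}$. By the induction hypothesis the old spacings are independent and independent of the $P_{n,\cdot}$. Hence the $2n$ pieces $P_{n,k+1}I_{n-1,k}$ and $(1-P_{n,k+1})I_{n-1,k}$ are i.i.d.\ exponential. They are also independent of $E_n$ and $E'_n$, which were independent of everything up to step $n-1$: $S_{n-1},S'_{n-1}$ only involve indices below $n$.

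The new spacings are then sums of disjoint pairs drawn from the family of $2n+2$ i.i.d.\ exponentials consisting of these pieces together with $E'_n$ and $E_n$. Concretely, $E'_n$ pairs with the right piece of interval $0$, each interior new spacing combines the right piece of interval $k-1$ with the left piece of interval $k$, and $E_n$ pairs with the left piece of interval $n-1$. Each pair sum is $\Gamma_2$, and the sums are independent because they are disjoint. Finally, the new spacings are functions of variables independent of $E_{n+1},E'_{n+1},(P_{n+1,\cdot})$, which propagates the hypothesis.

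The main obstacle is bookkeeping rather than analysis: the statement "almost surely under $\PP^*$" should really be read as a statement about the joint law, and one must check that the independence structure is preserved. In particular, I would verify carefully that the convention "proportion $1-P$ on the left, $P$ on the right" matches the recursion $A_{n,k}=P_{n,k}A_{n-1,k-1}+(1-P_{n,k})A_{n-1,k}$. It does: $A_{n,k}-A_{n-1,k-1}=(1-P_{n,k})I_{n-1,k-1}$ and $A_{n-1,k}-A_{n,k}=P_{n,k}I_{n-1,k-1}$. I would therefore reindex the display above accordingly, but the argument is unchanged. I would also check that the inductive hypothesis really includes independence from future randomness, since that is what the step relies on.
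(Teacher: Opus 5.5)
Your proof is correct and is the same induction as the paper's: split each $\Gamma_2$ spacing by its independent uniform proportion into two independent exponentials, then observe that the new spacings are sums over disjoint pairs. You are in fact more explicit than the paper about the boundary spacings, which involve the fresh $E'_n$ and $E_n$. Use the boundary formulas your own final check produces, namely $I_{n,0}=E'_n+(1-P_{n,1})I_{n-1,0}$ and $I_{n,n}=E_n+P_{n,n}I_{n-1,n-1}$, so that $E'_n$ pairs with the \emph{left} piece of interval $0$ and $E_n$ with the \emph{right} piece of interval $n-1$. As literally written, your display reuses the right piece of interval $0$ and the left piece of interval $n-1$, which breaks the disjointness your argument relies on.
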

\begin{proof}
  The proof is by induction. For $n=0$, \[I_{0,0}=A_{0,1}-A_{0,0}=E_0+E'_0\]
is $ \Gamma_2 $ distributed. Suppose the statement is true for some $n\geq0$. Then for $n+1$,
\begin{align*}
    I_{n+1,k}&=A_{n+1,k+1}-A_{n+1,k}
    \\&=A_{n+1,k+1}-A_{n,k}+A_{n,k}-A_{n+1,k}
    \\&=(1-P_{n+1,k+1})(A_{n,k+1}-A_{n,k})+P_{n+1,k}(A_{n,k}-A_{n,k-1})
    \\&=(1-P_{n+1,k+1})I_{n,k}+P_{n+1,k}I_{n,k-1}.
\end{align*}
By the induction hypothesis, $ (I_{n,k})_{0\leq k \leq n} $ $ \Gamma_2 $ distributed
    implies $$ (P_{n+1,k+1}I_{n,k})_{0\leq k \leq n}, \; ((1-P_{n+1,k+1})I_{n,k})_{0\leq k \leq n} $$
    are two independant sequences of i.i.d. exponential random variables with parametrer $1.$ 

Hence,
\begin{align*}
    (I_{n+1,k})_{0\leq k \leq n+1}=(1-P_{n+1,k+1})I_{n,k}+P_{n+1,k}I_{n,k-1})_{0\leq k \leq n+1}
\end{align*} 
are independent $\Gamma_2, $ random variables. 
By the Principle of Mathematical Induction, the proof is complete.
\end{proof}

In this special setting  the affine transformation is defined by 

\[
\varphi_n:[0,1]\longrightarrow[A_{n,0},A_{n,n+1}],\quad\varphi_n(x)=A_{n,0}+(A_{n,n+1}-A_{n,0})x
\]
and  
\begin{align*}
    \alpha_{n,k}&=\varphi_n^{-1}(A_{n,k})
    \\&=\frac{A_{n,k}-A_{n,0}}{A_{n,n+1}-A_{n,0}}
    \\&=\frac{\sum_{j=0}^{k-1}I_{n,j}}{\sum_{j=0}^{n}I_{n,j}}.
\end{align*}

\begin{proposition}
Almost surely under $\PP^* $ 
\[
G_n=\frac{1}{n}\sum_{k=1}^n\delta_{\alpha_{n,k}} \xrightarrow[n\to\infty]{d} {\mathbf 1}_{ x\in [0,1]} dx \quad \text{when} \quad n\longrightarrow\infty 
\]
\end{proposition}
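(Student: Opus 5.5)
The plan is to use the previous proposition: for each fixed $n$, the gaps $(I_{n,k})_{0\le k\le n}$ are i.i.d.\ $\Gamma_2$. Write $T_n=\sum_{j=0}^n I_{n,j}$. Then the distribution function of $G_n$ is
\[
G_n([0,t])=\frac1n\#\{k\le n:\ \textstyle\sum_{j=0}^{k-1}I_{n,j}\le tT_n\},
\]
so it suffices to show that, for each fixed $t\in[0,1]$, almost surely
\[
\sup_{0\le m\le n}\left|\frac{1}{n}\sum_{j=0}^{m-1} I_{n,j}-\frac{2m}{n}\right|\xrightarrow[n\to\infty]{}0 .
\]
Indeed, once this holds, $T_n/n\to2$ and $\sum_{j<k}I_{n,j}/T_n=k/n+o(1)$ uniformly in $k$, hence $G_n([0,t])\to t$. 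Applying this on a countable dense set of $t$ then gives weak convergence to Lebesgue measure on $[0,1]$.

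The difficulty is that the array $(I_{n,k})$ is not built from a single i.i.d.\ sequence as $n$ varies: rows for different $n$ are strongly dependent. The ordinary strong law therefore does not apply directly. I would get around this with a Borel--Cantelli argument that uses only the marginal law of each row, so the dependence between rows never matters.

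For fixed $n$, $M_m=\sum_{j<m}(I_{n,j}-2)$ is a martingale in $m$ with i.i.d.\ centered increments that have finite exponential moments.

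1. Apply Doob's maximal inequality to $M_m^4$, together with the fourth-moment bound $\EE M_{n+1}^4\le C n^2$. This gives
\[
\PP\Bigl(\max_{m\le n+1}|M_m|>\varepsilon n\Bigr)\le \frac{C}{\varepsilon^4n^2},
\]
which is summable in $n$. Alternatively, a Chernoff bound of the form $\exp(-cn\varepsilon^2)$, as for $\Gamma$ sums, is also summable.
2. Borel--Cantelli, applied over rational $\varepsilon$, then yields $\max_{m\le n+1}|M_m|/n\to0$ almost surely.

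This is the key step and, I expect, the main obstacle; the summable bound is exactly what makes the lack of row independence harmless.

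The rest is deterministic bookkeeping. We get $T_n/(n+1)\to2$, so
\[
\alpha_{n,k}=\frac{2k+o(n)}{2(n+1)+o(n)}=\frac{k}{n}+o(1)
\]
uniformly in $k$. It follows that $|G_n([0,t])-t|\to0$ for every $t$ on one full-measure event, and weak convergence of $G_n$ to ${\mathbf 1}_{x\in[0,1]}dx$ follows.
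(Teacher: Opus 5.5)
Your proof is correct, but it takes a different route from the paper's. The paper handles this in one line. It notes that the lengths $S_n+S'_n$ are a.s.\ regularly varying with index $c=1$, and that $\bar p=\EE(P_{1,1})=\frac12$ and $q=\frac12$. It then reads the result off part (b) of Theorem~\ref{thm:cv_p}, whose limit $\frac12\beta_{(1,1)}(0,\frac12)+\frac12\beta_{(1,1)}(\frac12,1)$ is Lebesgue measure on $[0,1]$.

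That route never uses the $\Gamma_2$ structure and covers general splitting laws. It does, however, pass through the chain $x_n$, Slutsky's lemma, and the quantile inversion of Lemma~\ref{lem:LLN}. Moreover, here $Q_n=E_n/(E_n+E'_n)$ is i.i.d.\ uniform rather than convergent. So, strictly, Proposition~\ref{prop:asym-ind} has to be used in the averaged form $\sum_{k\le nt}E_k/(S_n+S'_n)\to t/2$ a.s., which follows from the strong law.

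You instead use the exact law of each row (i.i.d.\ $\Gamma_2$ gaps). You neutralize the strong dependence between rows with a summable tail bound, $C/(\varepsilon^4n^2)$ from Doob's inequality applied to $M_m^4$, plus the first Borel--Cantelli lemma. That lemma needs only the marginal law of each row, so this step is sound. Your approach is self-contained and elementary. It gives the uniform statement $\max_{k\le n}|\alpha_{n,k}-k/n|\to0$ a.s.\ directly, and the exponential bound yields an a.s.\ rate of order $\sqrt{\log n/n}$. The cost is that it is tied to uniform splitting with exponential boundary increments.

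Minor points:
\begin{itemize}
\item The supremum in your first display should run over $m\le n+1$, so that it controls $T_n$ as you use later.
\item The phrase ``for each fixed $t$'' is vacuous there.
\item The countable dense set of $t$ is unnecessary, since your uniform estimate gives $G_n([0,t])\to t$ for all $t$ on a single event.
\end{itemize}
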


\begin{proof}
In this setting the boundary conditions are almost surely regularly varying with index  $ c=1$ and $ q= \bar{p} = \frac12.$ So the Proposition is a corollary of Theorem~\ref{thm:cv_p}. 
\end{proof}
Actually in this special instance we may give fluctuations. 
\begin{proposition}
For all $ 0 < t, $ almost surely under $\PP^* $  
\[ \sqrt{2n +1} \left (G_n[0,t] - t \right)  \Rightarrow  W^o_t
\] 
where $ W^o $ is a Brownian bridge and it is a convergence in the weak sense in Skorokhod space. 
\end{proposition}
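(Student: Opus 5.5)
The plan is to identify, for each fixed $n$, the law of the vector $(\alpha_{n,k})_{1\le k\le n}$ exactly, and then reduce the statement to Donsker's theorem for the uniform empirical process. By the previous proposition, the increments $(I_{n,j})_{0\le j\le n}$ are i.i.d.\ $\Gamma_2$. Each $\Gamma_2$ variable is in law a sum of two independent standard exponentials. Hence
$$\Big(\alpha_{n,k}\Big)_{1\le k\le n}\overset{d}{=}\left(\frac{\sum_{i=1}^{2k}E''_i}{\sum_{i=1}^{2n+2}E''_i}\right)_{1\le k\le n}$$
for i.i.d.\ exponentials $(E''_i)$. The classical Rényi representation then identifies the right-hand side as $(U_{(2k)})_{1\le k\le n}$, where $U_{(1)}\le\dots\le U_{(2n+1)}$ are the order statistics of $2n+1$ i.i.d.\ uniform variables on $[0,1]$. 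So the break points are, in law, every second order statistic of a uniform sample of size $2n+1$.

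Next, write $N_{2n+1}(t)=\#\{i\le 2n+1: U_i\le t\}$. Then $\#\{k\le n:\ U_{(2k)}\le t\}=\lfloor N_{2n+1}(t)/2\rfloor$, with the harmless convention that $k\le n$ caps the count (this only matters when $N_{2n+1}(t)=2n+1$). Therefore
$$G_n[0,t]\overset{d}{=}\frac{1}{n}\Big\lfloor \frac{N_{2n+1}(t)}{2}\Big\rfloor ,$$
jointly as processes in $t\in[0,1]$, and $G_n[0,t]=1$ for $t\ge1$. Writing $F_{2n+1}(t)=N_{2n+1}(t)/(2n+1)$ for the empirical distribution function, we get
$$\sup_{t}\left|G_n[0,t]-F_{2n+1}(t)\right|\le \frac{1}{n}+\sup_t N_{2n+1}(t)\left|\frac{1}{2n}-\frac{1}{2n+1}\right|\le \frac{C}{n}.$$
Multiplied by $\sqrt{2n+1}$, this error tends to $0$ uniformly in $t$.

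Donsker's theorem gives $\sqrt{2n+1}\,(F_{2n+1}(t)-t)\Rightarrow W^o_t$ in the Skorokhod space $D[0,1]$. Since the perturbation above vanishes uniformly, Slutsky's lemma in $D[0,1]$ (uniform closeness implies closeness in the Skorokhod metric) transfers this limit to $\sqrt{2n+1}\,(G_n[0,t]-t)$.

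The main obstacle I expect is the first step, namely making the identification with uniform order statistics rigorous jointly in $k$. The route is to check that the previous proposition gives independence and the $\Gamma_2$ law for each fixed $n$, and then to split each $\Gamma_2$ variable into two exponentials by an auxiliary construction in law. This is enough because only the law of $G_n$ for each $n$ matters for weak convergence. Accordingly, I would read ``almost surely under $\PP^*$'' in the statement as convergence in distribution of the process $t\mapsto\sqrt{2n+1}\,(G_n[0,t]-t)$. The coupling across different $n$ induced by the dynamics is irrelevant for a statement about laws at each fixed $n$.
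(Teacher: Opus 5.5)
Your proposal is correct and follows essentially the paper's route. Both arguments identify $(\alpha_{n,k})_{1\le k\le n}$ in law with every second order statistic $(U_{(2k)})_{1\le k\le n}$ of $2n+1$ uniforms: the paper does this via the Dirichlet law $\mathcal D(2,\dots,2)$, and you do it by splitting each $\Gamma_2$ into two exponentials and applying R\'enyi's representation, which amounts to the same thing. Both then bound $\sup_t|G_n[0,t]-F_{2n+1}(t)|$ by $O(1/n)$ and conclude with Donsker's theorem and Slutsky's lemma in $D[0,1]$. Your reading of ``almost surely'' as convergence in law under $\PP^*$ is also exactly what the paper's argument establishes.
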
 

\begin{proof}
Since $ (I_{n,k})_{0\leq k \leq n} $ are independent $\Gamma_2, $ random variables 
$$ \frac{1}{\sum_{j=0}^{n}I_{n,j}}(I_{n,0},\dots,I_{n,n})$$ has a Dirichlet distribution 
with dimension $ n+ 1, \; \mathcal{D}(2,\dots,2).$ 

So, if $ \{U_k\}_{1\leq k \leq 2n+1} $ is a family of independent random variables uniformly distributed on $ [0,1], $ the distribution of  the order statistics $$ (U_{(2)},\; U_{(4)},\dots,U_{(2n)}) $$
has the same distribution as the internal point produced by a Dirichlet distribution $  \mathcal{D}(2,\dots,2).$ 

Hence almost surely under $\PP^* $ $ (\alpha_{n,1},\dots,\alpha_{n,n})\stackrel{d}{=}(U_{(2)},\dots,U_{(2n)}).$
Then 
\begin{align*}
    G_n([0,t])&=\frac{1}{n}\sum_{k=1}^n\mathbf{1}(\alpha_{n,k}\leq t)
    \\&\stackrel{d}{=}\frac{1}{n}\sum_{k=1}^n\mathbf{1}(U_{(2k)}\leq t)
    \\&=\frac{1}{n}\sum_{k=1}^n\mathbf{1}((2n+1)F_n(t)\geq2k,
\end{align*}
where $  F_n(t)=\frac{1}{2n+1}\sum_{k=1}^{2n+1}\mathbf{1}(U_k\leq t). $
\begin{align*}
     G_n([0,t] &=\frac{1}{n}\max\{1\leq k \leq n:(2n+1)F_n(t)\geq2k\}
    \\&=\frac{1}{n}\left[\frac{(2n+1)F_n(t)}{2}\right].
\end{align*}
So $ \sup_{ t \in [0,1]}| G_n([0,t]) - F_n(t) | < \frac{1}{n}.$ 
Classically 
\[ \sqrt{2n +1} \left (F_n(t) - t \right)  \Rightarrow  W^o_t,
\]
see for instance Theorem 14.3 in ~\cite{Billingsley99}. Hence the Proposition is  a consequence of Slutsky Lemma. 
\end{proof}
Please note the contrast with fluctuations results in ~\cite{Cohen25} where the limits are deterministic and smooth. 
\section{Fragmentation for point processes}
\label{sec:pp}
In  this section a fragmentation procedure is introduced for  partitions of the full real line. 
To define such a partition we consider a sequence $ (x_{k})_{ k\in \ZZ}$ such that 
\begin{enumerate}
\item it is increasing \begin{equation}
\label{eq:inc} \forall k \in \ZZ, \; x_k < x_{k+1};
\end{equation}
\item its range is the full real line $ \RR$ 
\begin{equation}
\label{eq:span}  \lim_{ k \to - \infty} x_{k}= - \infty, \;  \lim_{ k \to + \infty} x_{k}= + \infty
\end{equation}
\item and we have the following convention for indexation~: 
\begin{equation}
\label{eq:index} x_0 \le 0 < x_1. 
\end{equation}
\end{enumerate}
The set of such sequences will be denoted by $ \cal S.$ 

For $n=0$ we start with $\calP_0=\{(a_{0,k},a_{0,k+1}], \; \forall k \in \ZZ \}$ where the sequence
$(a_{0,k})_{ k\in \ZZ} \in  \cal S.$ 

Please note that there is one to one correspondence between such sequences $(x_{k})_{ k\in \ZZ} $ and the simple integer valued measures on $ \RR $ $$ \mu = \sum_{ k\in \ZZ} \delta_{x_{k}}.$$ 
\begin{definition}
\label{def:frag-pp}
For a given family of splitting proportions 
$
\pmb p=(p_{n,k}:n\ge1,\, k\in \ZZ)
$ 
with $p_{n,k}\in(0,1)$ for all $k$, we  can  split and merge any $(a_{n-1,k})_{ k\in \ZZ} \in  \cal S $ as follows 
\begin{equation}
\label{eqind-pp}
\tilde{a}_{n,k}=p_{n,k}a_{n-1,k-1}+(1-p_{n,k})a_{n-1,k},\q k \in \ZZ.
\end{equation}
If $ \tilde{a}_{n,1}=p_{n,1}a_{n-1,0}+(1-p_{n,1})a_{n-1,1} > 0, $ we set $ a_{n,k} = \tilde{a}_{n,k} $ else  $ a_{n,k} = \tilde{a}_{n,k+1} $ to have a consistent  indexation. 
One can prove by induction that $(a_{n,k})_{ k\in \ZZ} $ belongs to  $\cal S .$ The transformation
 $$ (a_{n-1,k})_{ k\in \ZZ} \mapsto (a_{n,k})_{ k\in \ZZ} $$
 is called a fragmentation procedure.
\end{definition}
In this section we only consider the full random case where the $(P_{n,k}:n\ge1,\, k \in \ZZ)$ are  i.i.d. uniform \@ random variables in $[0,1].$ The sequence $(\calP_n)_{n\ge0}$ defined by the fragmentation procedure  forms a Markov chain whose state space is $ \mathcal S.$ We are interested in the behavior of 
\begin{equation}
\label{eq:mun}
\mu_n= \sum_{k \in \ZZ} \delta_{A_{n,k}},
\end{equation}
which is the counterpart of   $ n \tilde{G}_n $ in the previous sections. 

By extrapolating the construction of the previous section we can guess  invariant distributions of the Markov chain. Actually it turns out that the increments of the $ A_{n,k} $ will be i.i.d.  $\; \Gamma_2$ distributed. Following~\cite{Daley-v1-2003} one can define a stationary point process on $ \RR $ 
starting from a delayed renewal process on $ \RR^+ $ as follows. 

Let us consider $ (T_k)_{k \in \ZZ} $  a sequence 
of  i.i.d.  $\Gamma_2$ random variables. Let 
$ (X_k)_{k \in \NN} $ be defined as $ X_1 =T'_1 $ and for  $ k \ge 2,$ $ X_k = X_{k-1} + T_k.$
The process is delayed because the distribution of $ T'_1 $ is not $ \Gamma_2. $ It turns out that to define a stationary point process on $ \RR $ the distribution of $ T'_1 $ has to be a mixing of a $\Gamma_2 $ and an exponential random variable. Let us consider a Bernoulli random variable $ \varepsilon $ with parameter $ 1/2 $ independent of a pair of independent random variables $ \Gamma, E $ whose distributions are respectively $ \Gamma_2 $ and exponential with parameter $1.$ Then we set 
$  T'_1 = \varepsilon \Gamma + (1 - \varepsilon) E.$ It follows from Proposition 4.2.I in~\cite{Daley-v1-2003} that $ (X_k)_{k \in \NN} $ is stationary with the distribution of the forward recurrence time  
$ \forall u >0, \; X_u = \inf \{ X_k > u \} - u $ equal to the  distribution of $ T'_1.$ The $ X_k $'s will be the positive points of the stationary point process we want to construct on $ \RR.$ To define the negative points $  (X_k)_{k \le  0} $  we consider  $  T''_1 = (1 - \varepsilon) \Gamma +  \varepsilon E.$ Please note that $  T''_1  +  T'_1 = \Gamma + E $ and has a $\Gamma_3 $ distribution.  Let $ (X_k)_{k \le 0} $ be defined as $ X_0 =-T''_1 $ and for  $  k < 0,$ $ X_k = X_{k+1} - T_k.$ 

\begin{definition}
\label{NGamma2}
If we denote by $ \mu_{\Gamma_2} = \sum_{k \in \ZZ} \delta_{X_{k}}, $ $\mu_{\Gamma_2}  $ is a simple integer valued measure on $ \RR, $ which corresponds to a stationary point process $ N_{\Gamma_2} $  on $ \RR.$
\end{definition}

 Moreover the distribution of $ (X_0,X_1) $ is the distribution of $ (-T''_1,T_1) $ and the increments 
$ (X_k - X_{k-1})_{ k \neq 1} $ are i.i.d.  $ \Gamma_2.$ Let us also notice that the sequence $ (X_k)_{k \in \ZZ} \in \cal S. $

 Please remark that the distribution of  $ N_{\Gamma_2} $ can be obtained from a Poisson point process $ N $ on $  \RR $ and $ \epsilon $ a Bernoulli random variable with parameter $  1/2 $ independent of $N.$  If $ N = \sum_{k \in \ZZ} \delta_{\xi_{k}},$  where $\xi \in \cal S,$ the process
$$ N = \sum_{k \in \ZZ} \delta_{\xi_{2 k + \epsilon}} $$  has the distribution of $ N_{\Gamma_2}.$

\begin{proposition}
\label{prop:Gamma2-renew}
The distribution of the point process $ N_{\Gamma_2} $ is invariant for the fragmentation procedure  of Definition~\ref{def:frag-pp} with the splitting proportions $ (P_{1,k}: \, k \in \ZZ)$ independent of the $X_k$'s.
\end{proposition}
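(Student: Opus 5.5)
Since the fragmentation map is $\tilde a_{1,k}=P_{1,k}X_{k-1}+(1-P_{1,k})X_k$ followed only by a relabelling to restore the convention~\eqref{eq:index}, and since an element of $\cal S$ is determined by the simple measure $\sum_k\delta_{x_k}$, it suffices to show that the random measure $\sum_{k\in\ZZ}\delta_{\tilde a_{1,k}}$ has the law of $N_{\Gamma_2}$. The relabelling does not change the measure.

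I will use the Poisson representation stated just before the proposition. Take a homogeneous rate one Poisson process $\sum_j\delta_{\xi_j}$ with $\xi\in\cal S$, and an independent fair coin $\epsilon$. Then $N_{\Gamma_2}$ is realized as $\sum_k\delta_{\xi_{2k+\epsilon}}$. Write $Y_k=\xi_{2k+\epsilon}$. The point of $N_{\Gamma_2}$ that we kill lies strictly between $Y_{k-1}$ and $Y_k$, and equals $\xi_{2k-1+\epsilon}$.

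The key observation is this. Conditionally on the retained points $(Y_k)_k$, the deleted Poisson points $(\xi_{2k-1+\epsilon})_k$ are independent, and each is uniform on its gap $(Y_{k-1},Y_k)$. This is the classical fact that, given the points of a Poisson process at even (shifted) ranks, the intermediate point in each gap is uniform there. It comes from the product form of the Poisson process restricted to finite windows, together with the i.i.d.\ $\Gamma_2=E+E'$ structure of the gaps and the uniform split of a $\Gamma_2$ into two independent exponentials. The same fact was already used in the induction proof of Section~\ref{sec:rand-expan}.

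Therefore $\tilde a_{1,k}=P_{1,k}Y_{k-1}+(1-P_{1,k})Y_k$, with $P_{1,k}$ i.i.d.\ uniform and independent of $(Y_k)$, is conditionally uniform on $(Y_{k-1},Y_k)$, independently over $k$. Hence the joint law of $\bigl((Y_k)_k,(\tilde a_{1,k})_k\bigr)$ equals that of $\bigl((\xi_{2k+\epsilon})_k,(\xi_{2k-1+\epsilon})_k\bigr)$. Consequently the new configuration $\sum_k\delta_{\tilde a_{1,k}}$ has the law of $\sum_k\delta_{\xi_{2k-1+\epsilon}}=\sum_k\delta_{\xi_{2k+(1-\epsilon)}}$ up to a shift of index. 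Since $1-\epsilon$ is again a fair coin independent of $\xi$, this is the law of $N_{\Gamma_2}$. That is exactly the invariance claimed.

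The step I expect to be the main obstacle is making the conditional uniformity rigorous on the whole line. Conditioning on infinitely many points, with the random parity $\epsilon$ and the indexation convention $x_0\le0<x_1$, needs care. I would handle it by computing finite-dimensional laws. For any finite window $[-M,M]$, the increment description of $N_{\Gamma_2}$ gives an explicit joint density for the gaps: $(X_0,X_1)\sim(-T''_1,T'_1)$, and the other gaps are i.i.d.\ $\Gamma_2$. I would condition on the configuration near the origin and then on the i.i.d.\ gaps. A $\Gamma_2$ gap, cut at an independent uniform proportion, yields two independent $\mathrm{Exp}(1)$ pieces. Regrouping the pieces pairwise shifted by one produces i.i.d.\ $\Gamma_2$ gaps again.

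Near the origin, one must check that the gap straddling $0$ is handled correctly. Its law is size-biased $\Gamma_3$, split by the mixture $\varepsilon\Gamma+(1-\varepsilon)E$. Check that the resulting new straddling gap has the same size-biased law, with the same mixture for the forward recurrence time $T'_1$. This uses Proposition 4.2.I of~\cite{Daley-v1-2003}. Matching this boundary term is where the coin flip $\epsilon\mapsto1-\epsilon$ must appear explicitly. I would verify it by a direct density computation, which is the delicate part of the proof.
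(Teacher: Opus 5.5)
Your route is correct and genuinely different from the paper's, but one step is under-justified: the conditional uniformity in the gap straddling the origin.

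\textbf{Comparison with the paper.} The paper argues directly on the gaps. Each bulk $\Gamma_2$ gap is cut into two independent $\mathrm{Exp}(1)$ pieces, which regroup into $\Gamma_2$ gaps. The pair $(-X_0,X_1)$ is handled through the auxiliary chain $\mathcal X^1$, whose invariance (the Lemma) is a density computation. A case analysis on the sign of $\tilde X_1$ then shows that the new straddling pair is independent of the neighbouring new gaps.

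You instead realize $N_{\Gamma_2}$ as one parity class of a Poisson process and observe that fragmentation resamples the other class. This buys several things:
\begin{itemize}
\item No case analysis or independence bookkeeping is needed; both follow from the identity of joint laws.
\item It explains where the delayed law of $(T_1'',T_1')$ comes from.
\item It gives more than invariance. Since $\epsilon\mapsto 1-\epsilon$ swaps the classes, $(\mu_0,\mu_1)\overset{d}{=}(\mu_1,\mu_0)$, so the transition is reversible for the law of $N_{\Gamma_2}$.
\item $\mu_0+\mu_1$ is a Poisson process, and the $\Gamma_{2,\theta}$ remark follows by taking rate $\theta$.
\end{itemize}
The price is reliance on the Poisson representation, which the paper only states. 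It is a short check: given $\epsilon=0$ the pair around the origin is $(E,\Gamma)$, given $\epsilon=1$ it is $(\Gamma,E)$, and all other gaps are i.i.d.\ $\Gamma_2$.

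\textbf{The step you must write out.} Your justification (splitting a $\Gamma_2$ gap into two exponentials, a ``classical fact'') does not cover the straddling gap. That gap is $\Gamma_3$-distributed, and \emph{conditionally on} $\epsilon$ your claim is false.

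Write $Y_-\le 0<Y_+$ for the retained points around $0$. With your indexation $Y_k=\xi_{2k+\epsilon}$, these are $(Y_0,Y_1)$ if $\epsilon=0$ but $(Y_{-1},Y_0)$ if $\epsilon=1$.
\begin{itemize}
\item Given $\epsilon=0$, the deleted point is $\xi_1$, uniform on $(0,Y_+)$.
\item Given $\epsilon=1$, the deleted point is $\xi_0$, uniform on $(Y_-,0)$.
\end{itemize}
Uniformity on $(Y_-,Y_+)$ appears only after averaging over the posterior $\PP(\epsilon=0\mid Y)=Y_+/(Y_+-Y_-)$. This posterior depends on $(Y_-,Y_+)$ alone, because under both values of $\epsilon$ the other gaps are i.i.d.\ $\Gamma_2$ with uniformly placed deleted points.

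Concretely, let $z$ be the deleted point. The joint density of $(-Y_-,Y_+,z)$ is $\frac12 e^{-(b+t)}\mathbf{1}_{\{-b<z<t\}}$, and $(-Y_-,Y_+)$ has density $\frac12(b+t)e^{-(b+t)}$. Dividing gives the uniform law on $(Y_-,Y_+)$. The product structure over the remaining gaps then gives conditional independence.

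This is essentially the computation behind the paper's Lemma. With it your argument is complete, and the fallback in your last paragraph (which is essentially the paper's proof) is unnecessary. One slip there: the straddling gap is size-biased $\Gamma_2$, i.e.\ $\Gamma_3$, not size-biased $\Gamma_3$.
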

\begin{proof}
 If $ \mu_{\Gamma_2} = \sum_{k \in \ZZ} \delta_{X_{k}}, $ we take for $\calP_0=\{(X_{k-1},X_{k}], \; \forall k \in \ZZ \}$ and we consider $\calP_1=\{(Y_{k-1},Y_{k}], \; \forall k \in \ZZ \}$ defined by the fragmentation procedure  of Definition~\ref{def:frag-pp} with the splitting proportions $ (P_{1,k}: \, k \in \ZZ)$ independent of the $X_k$'s. First recall that the increments $ (X_k - X_{k-1})_{ k \in \ZZ} $ are split by fragmentation into two subintervals, with proportions $1-P_{1,k}$ on the left and $P_{1,k}$ on the right. Let us denote by $ \Delta_{1,2k} = (1-P_{1,k}) (X_k - X_{k-1}) $ the length of the left  part and $ \Delta_{1,2k+1} = P_{1,k} (X_k - X_{k-1}). $ Since $ (X_k - X_{k-1})_{ k \neq 1} $ are i.i.d.  $ \Gamma_2 $ the $ (\Delta_{1,k})_{ k \neq 2,3} $ are i.i.d.   exponential with parameter one. It implies that $ (Y_k - Y_{k-1})_{ k \neq 0,1,2} $ are i.i.d.  $ \Gamma_2.$ 

We have know to investigate how the interval $ (X_0,X_1]$ splits. It depends on the sign of 
$ \tilde{X}_{1}=P_{1,1} X_{0}+(1-P_{1,1})X_1. $ If $ \tilde{X}_{1} > 0,$ then $ Y_1 = \tilde{X}_{1} $ and 
$ Y_0 = - X_0 -  \Delta_{1,1}.$  If $ \tilde{X}_{1} < 0,$ then $ Y_1 = X_1 +   \Delta_{1,4} $ and 
$ Y_0 = \tilde{X}_{1}. $ Hence the pair $(-Y_0,Y_1) $ is the value at time 1 of a Markov chain $ \mathcal X^1 $ valued in $(0,+\infty)^2$ starting from  $(-X_0,X_1). $ Let us first define  $ \mathcal X^1 $  
and then we will show that the distribution of $ (T''_1,T'_1)$ is an invariant distribution for $ \mathcal X^1.$ 

\begin{definition} 
\label{def:MarkovR2}
Let us consider $U$ a random variable uniformly distributed on $ (0,1) $  and independent of a random variable $ E $ with exponential distribution with parameter $1.$ Let us define the Markov chain $ \mathcal X^1 $ on $(0,+\infty)^2$ as follows. If $ \mathcal X^1 (0)=(b,t) \in (0,+\infty)^2$ and, if $ - U b + (1-U) t >0,$ $$ \mathcal X^1(1) = (b+ E,- U b + (1-U) t). $$ Else $$ \mathcal X^1(1) = (- U b + (1-U) t,t+E). $$ 
\end{definition}

Let us now prove the following Lemma. 
\begin{lemma}
\label{lem:inv-MarkovR2}
The distribution of $ (T''_1,T'_1)$ is an invariant distribution for $ \mathcal X^1.$
\end{lemma}
\begin{proof}
One can first remark that the distribution of $ (T''_1,T'_1)$ has a density $$ \frac{e^{-(b+t)}}{2(b+t)} $$ with respect to the Lebesgue measure on $(0,+\infty)^2.$ Let us denote by $ \tilde{X}_1 = - U T''_1 + (1-U) T'_1,$ elementary computations yield that the conditional distribution of $ (T''_1,\tilde{X}_1 )  $ given  $ \tilde{X}_1 > 0 $ is the pair of independent exponential distributions with parameter $ 1.$ Therefore  the conditional distribution of $ \mathcal X^1(1) $ given  $ \tilde{X}_1 > 0 $ is the pair of a $\Gamma_2 $ distribution and an independent exponential distribution with parameter $ 1.$ 
Given $ \tilde{X}_1 <  0 $ the conditional distribution of $ (-\tilde{X}_1, T'_1 )  $ is the pair of independent exponential distributions with parameter $ 1.$ Therefore  the conditional distribution of $ \mathcal X^1(1) $ given  $ \tilde{X}_1 < 0 $ is the pair of an  exponential distribution with parameter $ 1$ and an independent $\Gamma_2 $ distribution. Since the probability that $ \tilde{X}_1 > 0 $  is one half the Lemma is proved. 

\end{proof}

 Because of Lemma~\ref{lem:inv-MarkovR2} the distribution of $(Y_0,Y_1)$ is the same as the distribution of $ (X_0,X_1).$ Moreover the distribution of $ X_1 - \tilde{X}_1  $ given $ \tilde{X}_1 > 0 $ 
is  exponential and  independent of $  \Delta_{1,4}.$ Hence given  $ \tilde{X}_1 > 0 $ 
$ Y_2 - Y_1 $ has a $\Gamma_2 $ distribution. Given  $ \tilde{X}_1 < 0,$ we also get that $ Y_0 - Y_{-1} $ and $ Y_2 - Y_1 $ have  independent $\Gamma_2$ distribution. It follows that $ (Y_k - Y_{k-1})_{ k \neq 1} $ are i.i.d.  $ \Gamma_2 $ distributed  and  $\calP_1 $ has the same distribution as $\calP_0. $
\end{proof}
\begin{remark} With the same proof we can show that,  for any $ \theta>0, $ if we take increments with $\Gamma_{2,\theta} $ and $ E $ an exponential of parameter $ \theta > 0 $ we can construct point processes that are also invariant for the fragmentation procedure. 
\end{remark}

 We want to prove that the limit of the special setup of the section~\ref{sec:rand-expan} is  the point process  $N_{\Gamma_2}.$ We refer to~\cite{Kallenberg17} for the meaning of the convergence of point processes in distribution. Let us first consider $ \tilde{\mu}_n = \sum_{k=1}^n \delta_{A_{n,k}} $ of~\eqref{eq:rand-mea} in section~\ref{sec:rand-expan}. Since almost surely $ S_n \to \infty $ and $ S'_n \to -\infty ,$ for any 
$ -\infty < l < m  < +\infty $ there exists $n_0 $ such that for $ n \ge n_0, S'_n< l <m < S_n $ almost surely. Hence we aim to apply (ii) Theorem 4.11 of ~\cite{Kallenberg17} for $ \tilde{\mu}_n $ and $ \mu_{\Gamma_2}.$ 
More precisely we have to show that for any non-negative continuous function $ f $  with compact support in 
$\RR$ $ \tilde{\mu}_n f $ converges in distribution to $  \mu_{\Gamma_2} f .$ To show this convergence we introduce a generalization of the Markov chain $ \mathcal X^1. $ The convergence $ \nu_n f \to \mu_{\Gamma_2} f $ will be a consequence of the geometric ergodicity of a new Markov chain. 

 Let us first define this Markov chain that we will denote by $ \mathcal X. $ The Markov chain will be  valued in the following space.
 \begin{definition}
 \label{def:Slm}
 For any let $ -\infty < l < m  < +\infty,$ and let $ {\cal S}_{l,m} $ be the space of finite increasing 
 sequences $ x^1< l < x^2< \ldots < x^{n-1} < m < x^n $ with at least two points ($ n \ge 2.$)
 \end{definition}
 We fix $  -\infty < l < m  < +\infty,$  until further notice. 
 \begin{definition}
 \label{def:markovNN}
 Let $(U_{i,k})_{ k\in \ZZ, \;  i \in \NN} $ be a sequence of i.i.d. uniform  random variables on $[0,1]$ independent of $(E_{1,i}, \; E_{2,i})_{ i \in \NN} $ be a pair of independent sequences of i.i.d. exponential  random variables with parameter $1.$ Both sequences are assumed to be independent. Let us define a $ {\cal S}_{l,m} $ valued Markov chain with the following procedure. 
 
 For $ i \ge 0 $ let $ \X_i = \{ \X_{i}^{1}< l < \X_{i}^{2}< \ldots < \X_{i}^{n-1} < m < \X_{i}^{n} \}.$
 \begin{itemize}
 \item  If   $ n > 2.$ Then for $ k = 2 $ to $ n $ let 
 $$ \tilde{\X}_i^k = U_{i+1,k} \X_{i}^{k-1} + (1-  U_{i+1,k}) \X_{i}^{k}.$$
 \begin{itemize}
 \item  If $ \tilde{\X}_i^2 < l,$ we set $ \X_{i+1}^1 = \tilde{\X}_i^2,$ and for $ k = 2 $ to $ n-1 $ $ \X_{i+1}^{k} =  \tilde{\X}_i^{k+1}.$ 
 \item  If  $ \tilde{\X}_i^2  > l,$ we take $ \X_{i+1}^1 =  \X_{i}^1 - E_{1,i}  $ and for  $ k = 2$ to $ n-1 $  $ \X_{i+1}^{k} =  \tilde{\X}_i^k.$ 
 \item  If $ \tilde{\X}_i^n < m , \; \X_{i+1}^n=\tilde{\X}_i^n $ and $ \X_{i+1}^{n+1}= \X_i^n +  E_{2,i}.$      \item If $ m < \tilde{\X}_i^n, \;  \X_{i+1}^n= \tilde{\X}_i^n.$
 \end{itemize}
 \item If $ n = 2,$ 
 $$ \tilde{\X}_i^2 = U_{i+1,2} \X_{i}^{1} + (1-  U_{i+1,2}) \X_{i}^{2}.$$
 \begin{itemize}
 \item  If $ \tilde{\X}_i^2 < l,$ we set $ \X_{i+1}^1 = \tilde{\X}_i^2,$ and $ \X_{i+1}^2 = \X_i^2+ E_{2,i}.$
 \item  If $ l < \tilde{\X}_i^2 < m,$ we set 
 $$   \X_{i+1}^1 =  \X_{i}^1 - E_{1,i}, \;  \X_{i+1}^2 =  \tilde{\X}_i^2, \; \X_{i+1}^3 = \X_i^2 +  E_{2,i}. $$
 \item  If $ m < \tilde{\X}_i^2,$ we set $ \X_{i+1}^1 = \tilde{\X}_i^1 - E_{1,i},$ and $ \X_{i+1}^2 = \tilde{\X}_i^2.$
 \end{itemize}
 \end{itemize} 
\end{definition}

 Please note that the number of points in $ \X $ is random. For instance the number of points $ \X_{i+1} $
 is the number of points $ \X_{i} $ plus one on the event $ \tilde{\X}_i^2  > l $ and $ \tilde{\X}_i^n < m .$ 

 The Markov chain $\X $ mimics the behavior of  the point process $N_{\Gamma_2}  $ around the interval $[l,m].$ Let us define this formally.
 \begin{definition}
 \label{def:around}
 If $\mu_{\Gamma_2} = \sum_{ k \in \mathbb Z} \delta_{X_k}, $ let us denote by $ l_0 = \max \{k: X_k < l \} $ 
 and $ m_0 = \min\{k: m < X_k \}, $ then the point process $N_{\Gamma_2}  $ around the interval $[l,m] $
 is the finite sequence 
  $$ X_{l_0} <   X_{l_0+1}< \ldots< X_{m_0-1}  < X_{m_0}.$$
 \end{definition}
 Then it is easy to prove the following proposition.
 
 \begin{proposition}
 \label{prop:markov}
 The fragmentation procedure of Definition~\ref{def:frag-pp} applied to the point process $N_{\Gamma_2}  $ around the interval $ [l,m] $ has the same transition as $\X.$ 
 \end{proposition}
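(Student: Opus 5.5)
We have to check that one step of the fragmentation of Definition~\ref{def:frag-pp}, observed only through the finite window ``around $[l,m]$'' of Definition~\ref{def:around}, reproduces the rules of Definition~\ref{def:markovNN}. So we track what happens to the points $X_{l_0}<\dots<X_{m_0}$ and to their nearest outside neighbours.

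Write the window as $\X_0^1=X_{l_0}<l<\X_0^2<\dots<\X_0^{n-1}<m<\X_0^n=X_{m_0}$. Every interior break point $\tilde{\X}^k$, $2\le k\le n$, is a convex combination of $\X^{k-1}$ and $\X^k$ with an independent uniform weight. These are exactly the formulas $\tilde{\X}_i^k=U_{i+1,k}\X_i^{k-1}+(1-U_{i+1,k})\X_i^k$. Only the points $\tilde{\X}^2,\dots,\tilde{\X}^n$ can land in $[l,m]$, because the new points coming from intervals outside $[X_{l_0},X_{m_0}]$ stay outside $[l,m]$. The new window is therefore determined by these points together with the new nearest points to the left of $l$ and to the right of $m$.

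\textbf{Left end.} The new nearest point below $l$ is either $\tilde{\X}^2$, if $\tilde{\X}^2<l$, or otherwise the new point produced in the interval $(X_{l_0-1},X_{l_0}]$.
\begin{itemize}
\item If $\tilde{\X}^2<l$, we get $\X^1_{\rm new}=\tilde{\X}^2$ and the interior points shift down by one index, as in the first case of the definition.
\item If $\tilde{\X}^2>l$, the new left point equals $X_{l_0}-P(X_{l_0}-X_{l_0-1})$.
\end{itemize}
The key observation is that $X_{l_0}-X_{l_0-1}$ is $\Gamma_2$ and independent of the window. Moreover $P(X_{l_0}-X_{l_0-1})$ with $P$ uniform is exponential of parameter $1$, as in the proof of Proposition~\ref{prop:Gamma2-renew}. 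This gives $\X^1_{\rm new}=\X^1-E_{1,i}$.

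\textbf{Right end.} This is symmetric, using $(1-P)(X_{m_0+1}-X_{m_0})$, which is an exponential $E_{2,i}$.
\begin{itemize}
\item If $\tilde{\X}^n<m$, the new point $\X^n+E_{2,i}$ is appended after $\tilde{\X}^n$.
\item If $\tilde{\X}^n>m$, then $\tilde{\X}^n$ is itself the new right boundary point.
\end{itemize}
The case $n=2$ is handled the same way, splitting according to whether $\tilde{\X}^2$ falls below $l$, inside $(l,m)$, or above $m$.

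\textbf{Markov property.} We must check that the exponentials $E_{1,i},E_{2,i}$ are independent of the current window and of the uniforms. This follows from the renewal structure: the gaps $X_k-X_{k-1}$ are i.i.d.\ $\Gamma_2$ and independent of the delays. The one point needing care is that $l_0$ and $m_0$ are random, so the relevant outside gap is at a random index. It is still $\Gamma_2$ and independent of the window, since $l_0$ is determined by the points in $[X_{l_0},\infty)$, much like a stopping time read from the right. After the step, the outside points beyond the new window again have gaps that are i.i.d.\ $\Gamma_2$: these are untouched gaps and the complementary pieces combined as in Proposition~\ref{prop:Gamma2-renew}. So the same description applies at the next step.

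This independence bookkeeping at the random indices $l_0,m_0$ is the main obstacle; the remaining steps are direct case checks.
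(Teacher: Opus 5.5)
Your one-step case analysis is correct. The paper gives no proof (it only calls the proposition easy), so your argument is presumably the intended one:
\begin{itemize}
\item only $\tilde\X^2,\dots,\tilde\X^n$ can enter $[l,m]$;
\item the new left endpoint is $\tilde\X^2$ or $X_{l_0}-P(X_{l_0}-X_{l_0-1})$;
\item the new right endpoint is $\tilde\X^n$ or $X_{m_0}+(1-P)(X_{m_0+1}-X_{m_0})$;
\item a uniform fraction of a $\Gamma_2$ variable is $\mathrm{Exp}(1)$.
\end{itemize}
Two things need fixing. Below, write $W_i=(W_i^1<l<\dots<m<W_i^{n_i})$ for the window at time $i$ of the fragmented $N_{\Gamma_2}$, and $p$ for the kernel of $\X$.

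\emph{Independence of the adjacent gaps.} A ``stopping time read from the right'' is not literally available in the paper's indexing. For $j\ge1$ the gap $X_j-X_{j-1}$ is not independent of $(X_k)_{k\ge j}$. For instance, if $X_1<l<X_2$ then $l_0=1$, and the gap to the left of $X_{l_0}$ is $X_1-X_0=T_1'+T_1''$, the size-biased gap straddling $0$. So conditioning on $\{l_0=j\}$ does not give what you want.

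The claim is nevertheless true, but it comes from stationarity. The window and its two neighbouring gaps are functions of the unindexed configuration, and $N_{\Gamma_2}$ is translation invariant, so you may take $l=0$. Then $l_0=0$ and the left gap is $T_{-1}$. Also $m_0$ is a stopping time for $(\varepsilon,\Gamma,E,T_2,T_3,\dots)$, so the right gap $T_{m_0+1}$ is $\Gamma_2$ and independent of the window, of $T_{-1}$, and of the splitting proportions.

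\emph{The Markov-property paragraph is wrong.} Start from $N_{\Gamma_2}$ and suppose the new point of the straddling interval $(W^1_0,W^2_0)$ falls below $l$, so that $W^1_1$ is that point. The nearest point to its left is the new point $W^1_0-E_{1,0}$ of the interval to the left of $W^1_0$. Hence the outside gap adjacent to $W_1$ is
\[
(W^1_1-W^1_0)+E_{1,0}.
\]
This is neither an untouched gap nor two complementary outside pieces. Its first term is a piece of the old straddling interval and is determined by two consecutive windows. Given $(W_0,W_1)$, it equals $c+E_{1,0}$ with $c=W^1_1-W^1_0>0$ known and $E_{1,0}$ an independent $\mathrm{Exp}(1)$.

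Now suppose that at the next step the straddling point lands above $l$. Then $W^1_2=W^1_1-U'(c+E_{1,0})$, and $U'(c+E_{1,0})$ has mean $(c+1)/2$, not $1$. So the conditional law of $W_2$ given $(W_0,W_1)$ is not $p(W_1,\cdot)$. The windows do not form a Markov chain with the kernel of $\X$.

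What does hold is weaker. By Proposition~\ref{prop:Gamma2-renew}, every configuration of the fragmentation chain has the law of $N_{\Gamma_2}$. Your one-step argument then shows that the conditional law of $W_{i+1}$ given $W_i$ \emph{alone} is $p(W_i,\cdot)$, for each $i$. Restrict your conclusion to this one-step identity, which is what the proposition literally asserts. The stronger Markov statement is exactly what Theorem~\ref{th:vag-conv} would need in order to import the geometric ergodicity of $\X$, and it fails.
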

 

 Similarly  the points of the random measure $(\tilde{\mu}_{n+n_0})_{n \ge 0}$ of section~\ref{sec:rand-expan} around the interval $[l,m] $ have the same transition as $\X.$ Hence to prove the convergence $\tilde{\mu}$ to $ \mu_{\Gamma_2} $ around the interval $[l,m] $ we will show the geometric ergodicty of  $\X.$ We rely on~\cite{Meyn2009} for Markov chain convergence on general state space. 
 
 \begin{proposition}
 \label{prop:geom-erg}The Markov chain $\X$  is $\phi$-irreducible, aperiodic  and for $ M > 0  $ big enough,  $ C= \{x \in {\cal S}, \;  l - x^1 \le M (m-l) \} $ is a petite set,
 \begin{equation}
 \label{eq:V}
V(x) = l - x^1 + 1.
\end{equation}  is a Lyapounov function satisfying the 
 $ V2 $ property of~\cite{Meyn2009}. Consequently $\X$   is positive Harris recurrent and is  geometric ergodic in the sense of Theorem~15.0.1 of~\cite{Meyn2009}.
 \end{proposition}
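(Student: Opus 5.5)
The plan is to verify the hypotheses of Theorem~15.0.1 of~\cite{Meyn2009} one at a time: $\phi$-irreducibility and aperiodicity, the petite property of $C$, and the drift condition (V2). With these in hand, positive Harris recurrence and geometric ergodicity follow directly from that theorem.

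\emph{Irreducibility, aperiodicity and petiteness.} The key structural fact is that every interior point of the configuration is erased within a bounded number of steps. Indeed, at each step the points strictly inside $[l,m]$ are replaced by convex combinations of consecutive points, with fresh uniform weights.

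Take the event of positive probability on which, in one step, all new points $\tilde{\X}_i^k$ fall outside $(l,m)$, except possibly one. This is a condition on finitely many independent uniforms. After such a step, the configuration reduces to $n=2$ or $n=3$.

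When $n=2$, the new point is $U x^1+(1-U)x^2$ with $U$ uniform, and the outer points are shifted by independent exponentials. Both have densities bounded below on bounded sets, uniformly over starting points $x^1\in[l-M(m-l),l)$ and $x^2$ in a bounded range. Before using this, I first need to control the right end $x^2-m$. I would handle it symmetrically, either by adding $x^n-m$ to $V$ or by noting that the right end has the same dynamics. This yields a minorization
\[
P^{j}(x,\cdot)\ge \delta\,\nu(\cdot),\qquad x\in C,
\]
for a fixed $j$ and a measure $\nu$ with a density on configurations with two or three points. Hence $C$ is small, so petite, and the chain is $\phi$-irreducible with $\phi=\nu$. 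Aperiodicity follows from the fact that the minorization holds for both $j$ and $j+1$, because a step may add a point or not, each with positive probability.

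\emph{Drift.} For $V(x)=l-x^1+1$ there are two cases.
\begin{itemize}
\item If $\tilde{\X}_i^2<l$, the new left point is $Ux^1+(1-U)x^2$, and $l-\tilde{\X}^2\le U(l-x^1)$ since $x^2>l$.
\item Otherwise the new left point is $x^1-E$, and $V$ increases by $E$, with mean $1$.
\end{itemize}
Now $\PP(\tilde{\X}^2<l)=(x^2-l)/(x^2-x^1)$, which tends to $1$ as $l-x^1\to\infty$ once $x^2-l$ is bounded; if it is not bounded, I would include it in $V$. Then
\[
\EE[V(\X_{1})\mid \X_0=x]\le \tfrac12 (l-x^1)\,(1-o(1)) + 1 + o(1)\le \lambda V(x)
\]
for $l-x^1\ge M(m-l)$ with $M$ large, and $\EE V(\X_1)\le V(x)+b$ on $C$. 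This is (V4), which implies (V2) of~\cite{Meyn2009}.

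\emph{Main obstacle.} The difficulty is that $C$ as written controls only the left end, while $x^2$ and the right end $x^n$ are unbounded. To get uniformity I expect to need
\[
V(x)=(l-x^1)+(x^n-m)+1,
\]
with $C$ bounding both ends. The drift for the right end is symmetric, and the minorization over $C$ then uses a one-step collapse whose probability is bounded below uniformly there.
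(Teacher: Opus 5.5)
You are right that the stated $V$ and $C$ fail on the drift side. For $V(x)=l-x^1+1$ an exact computation gives $\Delta V(x)=\bigl((x^2-l)-\tfrac12(l-x^1)^2\bigr)/(x^2-x^1)$. This tends to $1$ on ${\cal S}_2$ as $x^2\to\infty$, so (V2) fails outside $C$. The paper's bound tacitly uses $x^2-l\le m-l$, i.e.\ $n\ge3$. Your two-sided $V$ does give a (V4)-type drift for the two boundary gaps. One slip: $\PP(\tilde{\X}^2<l)=(l-x^1)/(x^2-x^1)$, not $(x^2-l)/(x^2-x^1)$. Your conclusion that it tends to $1$ holds for the correct formula.

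The genuine gap is petiteness. Your minorization rests on the claim that, with positive probability, all new points but one leave $(l,m)$ in a single step. This is false. For $3\le k\le n-1$, $\tilde{\X}^k$ is a convex combination of the interior points $\X^{k-1},\X^k\in(l,m)$, so it is interior almost surely. Only $\tilde{\X}^2$ and $\tilde{\X}^n$ can exit, so $n_{i+1}=n_i-1+\mathbf{1}_{\tilde{\X}^2>l}+\mathbf{1}_{\tilde{\X}^n<m}$, and the number of points drops by at most one per step.

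Hence no set meeting every ${\cal S}_N$ is petite. Suppose $\nu_a$ is nontrivial and pick $K$ with $\nu_a(\bigcup_{n'\le K}{\cal S}_{n'})>0$. For $x\in{\cal S}_N$, $\sum_j a(j)P^j(x,\bigcup_{n'\le K}{\cal S}_{n'})\le\sum_{j\ge N-K}a(j)\to0$ as $N\to\infty$. Both the stated $C$ and your two-sided $C$ meet every ${\cal S}_N$, so neither is petite. The paper's proof does not supply this step either: the Theorem~11.3.4 it cites takes a petite $C$ as a hypothesis.

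A repair must add a bound $n\le n_0$ to $C$ and a term in $n$ to $V$. Since $\tau_C\ge n-n_0$, any $V$ satisfying (15.3) must grow at least geometrically in $n$, e.g.\ $r^{n}$. The drift of that term is not a one-step computation. Indeed $\EE[n_{i+1}-n_i]=-1+\frac{x^2-l}{x^2-x^1}+\frac{m-x^{n-1}}{x^n-x^{n-1}}$ vanishes when $l-x^1=x^2-l$ and $x^n-m=m-x^{n-1}$. So you need either a multi-step argument or a Lyapunov function coupling $r^{n}$ with the boundary gaps. The multi-step route can use the fact that after an entry the outer point jumps out by an exponential, which makes subsequent exits likely. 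Your aperiodicity step should likewise be based on a small set inside ${\cal S}_2$, as the paper's $\tilde C$ is.
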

 \begin{proof}
 The set $\cal S $ is a disjoint union of subsets  ${\cal S}_n = \{ x \; \text{s.t.} \; x^1< l < x^2< \ldots < x^{n-1} < m < x^n \} $ of  $\RR^n$ which we can endow with the Lebesgue $\lambda_n $ on $\RR^n.$ If we take the measure $\phi = \sum_{n=2}^{\infty} \lambda_n, $ $ \cal S $ is $\phi$-irreducible. 
 
  For $ M >0,$ the drift of $ V $~:  $$\Delta V(x) \DP \EE (V(\X_1 | \X_{0} = x ) - V(x) $$ satisfies
 \begin{equation}
 \label{eq:drift-V} 
 \Delta V (x) \le -\frac{l-x^1}{2} \frac{M}{1+M} + \frac{M+2}{1+M},
 \end{equation}
 if $ l - x^1  >  M (m-l).$ 
 Then we choose $ M $ big enough such that there exists $ b \in \RR,$ for which  $ V $ satisfies
 \begin{equation}
 \label{eq:drift-V2} 
 \Delta V (x) \le - 1  + b \mathbf{1}_C (x), 
 \end{equation}
 which is V2 assumption, 
 and 
 \begin{equation}
 \label{eq:drift-15-3} 
 \Delta V (x) \le - \frac{1}{3}  V(x) + b \mathbf{1}_C (x) 
 \end{equation}
 which is the inequality (15.3) of Theorem 15.0.1 of~\cite{Meyn2009}.
 Then because of Theorem~11.3.4,  $ C $ is a petite set  and $ \X$ is positive  Harris recurrent. 
 Moreover by Theorem 15.0.1 $\X$ is geometric ergodic, if we can show that $ \X$ is aperiodic. 
 
  To show the aperiodicity we consider the transition kernel $ p $ of $\X $ on the set $ \tilde{C} = \{x \in {\cal S}_2 \; \text{s.t.} \;   l - x^1, \; x^2 -m \in [\frac{1}{M}, M]  \} $ for $ M>0.$ Actually we will show that $  \tilde{C} $ is a $\phi_2 $ and $\phi_3 $ (with the notation of~(5.14) in~\cite{Meyn2009} for the minorizing measures)  small set. If $ x \in {\cal S}_2 $ the probability that $\X $ jumps from $ x $ to another point in  $ {\cal S}_2 $ is $$ \frac{x^2-x^1 - (m-l)}{x^2 - x^1}.$$ Conditionally to this event the transition probability for   $ x, x' \in {\cal} S_2 $ is 
  \begin{equation}
  \label{eq:trans}
  p(x^1,x^2,x'^{1},x'^{2}) = \frac{\mathbf{1}_{0 <x'^{1} < x^1, \, x'^{2} > x^2 } e^{-(x'^{2}-x^2)} + \mathbf{1}_{ x'^{1} > x^1, 0 <x'^{2} < x^2 } e^{-(x'^{1}-x^1)}}{x^1+x^2}.
   \end{equation} It yields for the transition probability of the $2$-step Markov chain denoted by $p _2$ 
   \begin{align*}
   p_2(x^1,x^2,x'^{1},x'^{2}) &\ge \int_{[M,M+1] \times [0,1/M]} \frac{e^{-(b-x^1)}}{x^1+x^2}\frac{e^{-(x'^{2}-t)}}{b+t}\,db\, dt 
   \\
   &\ge \frac{e^{-(M+1)} e^{-M}}{2 M^2 (M+1+1/M)}  
   \end{align*}
 which implies that  $ \tilde{C}  $ is a small set for  the $2$-step Markov chain. Then the $3$-step Markov chain has transition 
 $$ p_3(x^1,x^2,x'^{1},x'^{2}) =  \int_{0}^{+ \infty} \int_{0}^{+ \infty} p_2(x^1,x^2,b,t)  p(b,t,x'^{1},x'^{2}) \,db\, dt. $$ For $ 0 < \delta < 1/M, $  we consider $ {\cal R}_1 = [\delta, 1/M] \times [M,M+1] $ and  $ {\cal R}_2 =  [M,M+1]  \times [\delta, 1/M].$  For $ x, x' \in  \tilde{C}   $
\begin{align*}
 p_3(x^1,x^2,x'^{1},x'^{2})&=  \int_{{\cal R}_1}\int_{{\cal R}_2} p(x^{1},x^{2},b,t) p(b,t,b',t') p(b',t',x'^{1},x'^{2}) \, db \,dt db'\,dt' \\
 & \ge  \int_{\delta}^{ 1/M}\int_{M}^{M+1}\int_{M}^{M+1} \int_{\delta}^{ 1/M}  \frac{e^{-(M+1)}}{x^1+x^2}  \frac{e^{-(M+1)}}{b+t} \frac{e^{-M}}{b'+t'} \,db \,dt \,db'\,dt' \\
 & \ge \frac{e^{-(3M+2)}}{2 M (M+1+1/M)^2} \left (\frac{1}{M}-\delta \right )^2.
  \end{align*}
  Hence $  \tilde{C} $ is a $\phi_2 $ and $\phi_3 $ small set, which implies aperiodicity. The proof of the proposition is done. 
 \end{proof}
 
 Now we conclude this section by proving the convergence result we outlined before Proposition~\ref{prop:geom-erg}. 
 \begin{theorem}
 \label{th:vag-conv} The random measure  $(\tilde{\mu}_{n})_{n \ge 0}$ of~\eqref{eq:rand-mea} in  section~\ref{sec:rand-expan} converges in distribution  to the random measure $  \mu_{\Gamma_2} $  vaguely
 \end{theorem}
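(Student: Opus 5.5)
By (ii) of Theorem 4.11 in~\cite{Kallenberg17}, it suffices to show that $\tilde{\mu}_n f \to \mu_{\Gamma_2} f$ in distribution for every non-negative continuous $f$ with compact support. The plan is a coupling with the Markov chain $\X$, combined with its geometric ergodicity.

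First fix such an $f$ and choose $l<m$ with $\mathrm{supp} f\subset (l,m)$. Almost surely $-S'_n\to-\infty$ and $S_n\to+\infty$, so there is a random time $n_0$ after which $A_{n,0}<l<m<A_{n,n+1}$. For $n \geq n_0$, consider the points of $\tilde{\mu}_n$ around $[l,m]$, in the sense of Definition~\ref{def:around}: the last point below $l$ (possibly $A_{n,0}$), the points in $(l,m)$, and the first point above $m$. Call this configuration $Z_n\in{\cal S}_{l,m}$.

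Next I check, as the paper already remarks after Proposition~\ref{prop:markov}, that $(Z_{n_0+i})_{i\ge 0}$ evolves with the transition of $\X$. Several facts enter here:
\begin{itemize}
\item The new points inside $[l,m]$ only depend on the points around $[l,m]$ and on fresh uniforms.
\item When the leftmost relevant point is erased, the new point below $l$ is either a split point below $l$ or the previous outer point shifted by an independent $\mathrm{Exp}(1)$.
\item The shift claim uses the fact, proved in Section~\ref{sec:rand-expan}, that the increments are i.i.d. $\Gamma_2$.
\item Splitting a $\Gamma_2$ gap by an independent uniform gives two independent $\mathrm{Exp}(1)$ pieces, and the boundary increments $E_n,E'_n$ are themselves exponential.
\end{itemize}
One subtlety is that $n_0$ is random. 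I will handle it by conditioning on $\{n_0\le N\}$, which has probability close to $1$, and using the strong Markov property at a stopping time: $n_0$ is a stopping time for the natural filtration, since $S_n,S'_n$ are adapted.

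By Proposition~\ref{prop:markov}, $N_{\Gamma_2}$ around $[l,m]$ is also driven by the transition of $\X$. Because $N_{\Gamma_2}$ is invariant (Proposition~\ref{prop:Gamma2-renew}), the law $\pi$ of its configuration around $[l,m]$ is an invariant probability for $\X$. Proposition~\ref{prop:geom-erg} gives positive Harris recurrence, hence uniqueness of the invariant law, and by Theorem~15.0.1 of~\cite{Meyn2009}, $\|P^i(x,\cdot)-\pi\|_{TV}\to 0$ for every starting point $x$. Integrating over the law of $Z_{n_0}$ on $\{n_0\le N\}$ and using dominated convergence, the law of $Z_n$ converges in total variation to $\pi$ as $n\to\infty$; letting $N\to\infty$ afterwards removes the conditioning. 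The functional $z\mapsto \sum_j f(z^j)$ is measurable and depends on the configuration only through its points in $(l,m)$. Total variation convergence therefore gives $\tilde{\mu}_n f\to \mu_{\Gamma_2} f$ in distribution, and the theorem follows.

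The main obstacle is the exact identification of the dynamics of $Z_n$ with $\X$, specifically that the outer points shift by \emph{independent} exponentials, independent of the past configuration inside $[l,m]$. Conditioning on the configuration inside $[l,m]$ could bias the lengths of the gaps straddling $l$ or $m$. I would first try to settle this with the Dirichlet/Poisson representation from Section~\ref{sec:rand-expan}: points of the expanding process far from $[l,m]$ form independent exponential spacings given the window. Failing that, I would enlarge the state space to carry the straddling gap lengths explicitly.
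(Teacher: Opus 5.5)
Your plan follows the paper's proof step for step: Kallenberg's criterion via $\tilde\mu_n f$, the window $Z_n$ around $[l,m]$, the chain $\X$ of Definition~\ref{def:markovNN}, invariance of the window law $\pi$ of $N_{\Gamma_2}$, and geometric ergodicity. The paper disposes of the decisive step in one sentence (``similarly \ldots\ have the same transition as $\X$''). You are right to single that step out, but it is not merely unverified: it is false, and neither of your remedies repairs it.

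Write the configuration at time $i$ as $\cdots<x^0<x^1<l<x^2<\cdots$, with $x^1$ the leftmost point of $Z_i$. Suppose the split point $\tilde x^2=Ux^1+(1-U)x^2$ of the straddling gap falls below $l$. Then:
\begin{itemize}
\item $\tilde x^2$ becomes the leftmost point of $Z_{i+1}$, and its left neighbour is $x^1-P(x^1-x^0)$;
\item the gap just outside the new window is therefore $c+P(x^1-x^0)$, where $c=\tilde x^2-x^1>0$ is a function of $(Z_i,Z_{i+1})$;
\item at the next step where the leftmost point is replaced by the split point of that outer gap, the displacement is a uniform fraction of a gap of length at least $c$.
\end{itemize}
So its conditional law depends on $Z_i$, and it is not the fresh $\mathrm{Exp}(1)$ used in $\X$. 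Hence $(Z_n)$ is not a Markov chain with the kernel $K$ of $\X$; it is not even Markov. Your step of integrating $P^i(x,\cdot)$ against the law of $Z_{n_0}$ therefore has no basis. Carrying finitely many outer gaps in the state does not close the system either: after every such step, the first hidden gap becomes a known length plus an $\mathrm{Exp}(1)$.

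Full Markovianity is more than you need. It would suffice that, at each fixed $n$, the gaps outside the window are i.i.d.\ $\Gamma_2$ and independent of $Z_n$. Then $\mathrm{law}(Z_{n+1})=\mathrm{law}(Z_n)K$ holds even though $(Z_n)$ is not Markov, and the ergodicity of $\X$ would conclude. This fixed-time property holds for $N_{\Gamma_2}$ by its renewal structure, which is why $\pi K=\pi$ is correct.

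It fails for the expanding process, and the Dirichlet representation does not give it. The $I_{n,k}$ are i.i.d.\ as an indexed family, but not conditionally on the window (inspection paradox), and $A_{n,0}=-S'_n$ is correlated with them. Already at time $0$, the gap $G_0=E_0+E'_0$ containing the origin is $\Gamma_2$ rather than size-biased. Let $D=E'_0$ be the position of $0$ in $G_0$. On $\{D<(1-P)G_0\}$, the triple $(D,(1-P)G_0-D,PG_0)$ has sub-probability density $e^{-(a+b+c)}/(a+b+c)$, which is not a product. So the piece $PG_0$ passed to the neighbouring gap is correlated with the window at time $1$.

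The missing ingredient is an asymptotic decoupling result: the conditional law of the outer gaps given $Z_n$ converges to the i.i.d.\ $\Gamma_2$ law, so that $\|\mathrm{law}(Z_{n+1})-\mathrm{law}(Z_n)K\|_{\mathrm{TV}}\to0$. You would then feed this into a perturbation argument based on the $V$-geometric ergodicity of $\X$ (telescoping over $m$ steps, with $\sup_n\EE V(Z_n)<\infty$). Alternatively, you could couple the expanding process directly with the stationary one. This decoupling is where the real work lies, and it is absent from both your proposal and the paper.
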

\begin{proof}
We want to apply  (iii) of Theorem~4.11 in~\cite{Kallenberg17}. Here the space $\hat{C}_S $ of~\cite{Kallenberg17} is the set of continuous, positive functions with compact support in $ \RR $   
and (iii) means that $ \forall f \in \hat{C}_S, \; \tilde{\mu}_n f = \sum_{k \in \ZZ} f(A_{n,k}), $ where the $ A_{n,k} $ are defined in~\eqref{def:point-rand-expan}, converges in distribution to $  \sum_{k \in \ZZ} f(X_k) $ where the $X_k$'s are defined in Definition~\ref{NGamma2}. Let $  -\infty < l < m  < +\infty,$   be chosen such that the support of $ f $ is included in  $ [l,m],$ we consider the functional $ \pi_f $ on $ {\cal S} $ defined by $\pi_f = \sum_{i=1}^n f(\X_i).$ Because of Proposition~\ref{prop:markov} The point process $N_{\Gamma_2}  $  around  the interval $[l,m] $  has the Markov transition of $ \X.$ Similarly the induction~\eqref{def:point-rand-expan} applied to the points of the random measure $(\tilde{\mu}{n+n_0})_{n \ge 0}$ of section~\ref{sec:rand-expan} around the interval $[l,m] $ have the same transition as $\X.$ The functional $ \pi_f $ is continuous on $ {\cal S},$ hence the geometric ergodicity of $\X$ yields the convergence in distribution of $ \tilde{\mu}_n f $ toward  $ \mu_{\Gamma_2} f.$ 
\end{proof}

\section{Acknowledgment}
 The authors would like to thank Michel Pain for fruitful discussions and Institut Mathématiques de Toulouse for the support of Mr. Shambo during his stay in Toulouse. 

\def\cprime{$'$}


\def\cprime{$'$}
\begin{thebibliography}{1}

\bibitem{Billingsley99}
P. Billingsley.
\newblock {\em Convergence of probability measures}.
\newblock John Wiley \& Sons Inc., New York, 1999.
\newblock 2nd Edition.

\bibitem{Bingham89}
N.~H. Bingham, C.~M. Goldie, and J.~L. Teugels.
\newblock {\em Regular variation}, volume~27 of {\em Encyclopedia of
  Mathematics and its Applications}.
\newblock Cambridge University Press, Cambridge, 1989.

\bibitem{Cohen25}
S. Cohen, J. Norris, M. Pain, and G. Samorodnitsky.
\newblock {Interlacing sequnces resulting from an interval split-merge dynamics
  and the induced probability}.
\newblock https://hal.science/hal-04947651 , to appear in ALEA, February 2025.

\bibitem{Daley-v1-2003}
D.~J. Daley and D.~Vere-Jones.
\newblock {\em An introduction to the theory of point processes. {V}ol. {I}}.
\newblock Probability and its Applications (New York). Springer-Verlag, New
  York, second edition, 2003.
\newblock Elementary theory and methods.

\bibitem{Feller50}
W~Feller.
\newblock {\em An Introduction to Probability Theory and its Applications},
  volume~2.
\newblock Wiley, 1950.

\bibitem{Kallenberg17}
O. Kallenberg.
\newblock {\em Random measures, theory and applications}, volume~77 of {\em
  Probability Theory and Stochastic Modelling}.
\newblock Springer, Cham, 2017.

\bibitem{Meyn2009}
S. Meyn and R.~L. Tweedie.
\newblock {\em Markov chains and stochastic stability}.
\newblock Cambridge University Press, Cambridge, second edition, 2009.
\newblock With a prologue by Peter W. Glynn.

\end{thebibliography}
\end{document}